\theoremstyle{plain}
\newtheorem{thm}{Theorem}[section]
\newtheorem{lem}[thm]{Lemma}
\newtheorem{prop}[thm]{Proposition}
\newtheorem{cor}[thm]{Corollary}
\theoremstyle{definition}
\newtheorem{eg}[thm]{Example}
\theoremstyle{remark}
\newtheorem{rmk}[thm]{Remark}
\def\Q{{\mathbf Q}}
\def\C{{\mathbf C}}
\def\cD{\mathcal{D}}
\def\cI{\mathcal{I}}
\def\cM{\mathcal{M}}
\def\cO{\mathcal{O}}
\def\.{\cdot}
\def\^{\widehat}
\def\({\left(}
\def\){\right)}
\renewcommand{\and}{ \ \ \text{ and } \ \ }
\DeclareMathOperator{\lct} {lct}
\begin{document}

\author{Bradley Dirks}
\author{Mircea Musta\c{t}\u{a}}

\address{Department of Mathematics, University of Michigan, 530 Church Street, Ann Arbor, MI 48109, USA}

\email{bdirks@umich.edu}
\email{mmustata@umich.edu}

\thanks{The authors were partially supported by NSF grant DMS-1701622.}

\subjclass[2010]{14F10, 14F18, 14B05}

\begin{abstract}
By building on a method introduced by Kashiwara \cite{Kashiwara} and refined by Lichtin \cite{Lichtin},
we give upper bounds for the roots of certain $b$-functions associated to a regular function $f$
in terms of a log resolution of singularities. As applications, we recover with more elementary
methods a result of Budur and Saito \cite{BS} describing the multiplier ideals of $f$ in terms of
the $V$-filtration of $f$ and a result of the second named author with Popa \cite{MP}
giving a lower bound for the minimal exponent of $f$ in terms of a log resolution.
\end{abstract}

\title{Upper bounds for roots of $B$-functions, following Kashiwara and Lichtin}

\maketitle

\section{Introduction} 

 Given a nonzero regular function $f\in \mathcal O_X(X)$ on a smooth complex variety $X$, the \emph{Bernstein-Sato polynomial}
(or \emph{$b$-function}) of $f$ is the monic polynomial 
 $b_f(s)$ of minimal degree that satisfies
 \begin{equation}\label{eq_def_b_function}
 b_f(s) f^s \in {\mathcal D}_X[s]\cdot f^{s+1},
 \end{equation} where $s$ is an indeterminate
 and ${\mathcal D}_X$ is the sheaf of differential operators on $X$. Here $f^s$ can be treated as a formal symbol on which differential
 operators on $X$ act in the expected way. For example, if $f$ defines a nonempty smooth hypersurface, then $b_f(s)=s+1$.
 The $b$-function is an important invariant of singularities introduced independently by Bernstein \cite{Bernstein} and Sato. 
 
 Kashiwara \cite{Kashiwara} showed that the roots of $b_f(s)$ are negative rational numbers.
By refining Kashiwara's approach, Lichtin proved in \cite[Theorem~5]{Lichtin} the following estimate for the roots of $b_f(s)$ in terms of a 
strong log resolution of the pair $(X,D)$, where $D$ is the divisor defined by $f$. 
By this we mean a projective morphism
$\pi\colon Y\to X$ which is an isomorphism over the complement of the support of $D$, such that $Y$ is smooth
and $\pi^*D=\sum_{i=1}^ra_iE_i$ is a simple normal crossing divisor. Note that we can also write 
$K_{Y/X}=\sum_{i=1}^rk_iE_i$, where 
$K_{Y/X}$ is the relative canonical divisor, locally defined by the Jacobian of the morphism $\pi$.

\begin{thm}[Lichtin]\label{thm_Lichtin}
With the above notation, every root of $b_f(s)$ is of the form $-\frac{k_i + 1+ \ell}{a_i}$ for some 
$i$ and some nonnegative integer $\ell$.
\end{thm}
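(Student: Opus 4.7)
The plan, following Kashiwara and Lichtin, is to pull back the defining functional equation $b_f(s)\,f^s = P(s)\cdot f^{s+1}$ along $\pi$ and reduce to a monomial calculation on $Y$. Viewing this equation in the ${\mathcal D}_X[s]$-module $\mathcal{O}_X[f^{-1},s]\,f^s$ and applying the pullback functor for (left) $\mathcal{D}$-modules, one obtains, locally on $Y$,
\[
b_f(s)\,g^s \;=\; Q(s)\cdot g^{s+1},
\]
with $g = f\circ\pi$ and $Q(s)\in {\mathcal D}_Y[s]$.

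Next, at a point $q\in \pi^{-1}(\Supp D)$, choose local coordinates $y_1,\ldots,y_n$ on $Y$ so that $g = u\,y_1^{a_1}\cdots y_r^{a_r}$ for some unit $u$ and so that the Jacobian of $\pi$ has the form $v\,y_1^{k_1}\cdots y_r^{k_r}$ for some unit $v$. The heart of the argument is to understand when a ${\mathcal D}_Y$-operator, applied to $g^{s+1}$, produces an element of $\C[s]\cdot g^s$. The Jacobian enters via the transfer bimodule ${\mathcal D}_{Y\to X}$: it effectively shifts the exponent of $y_i$ by $k_i$, and the essential monomial computation becomes
\[
\partial_{y_i}^{a_i}\bigl(y_i^{a_i(s+1)+k_i+m}\bigr) \;=\; \prod_{\ell=1}^{a_i}\bigl(a_i s + k_i + m + \ell\bigr)\,y_i^{a_i s+k_i+m},
\]
valid for every integer $m\ge 0$. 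Combining these across $i=1,\ldots,r$, every coefficient produced by such a reduction is a product of linear forms $a_i s + k_i + 1 + \ell$ with $\ell \in \Z_{\ge 0}$.

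Putting these two steps together, $b_f(s)$ must lie in the ideal of $\C[s]$ generated by products of the form $\prod_{i,\ell}(a_i s + k_i + 1 + \ell)$, so it divides such a finite product; therefore every root of $b_f(s)$ has the claimed form $-(k_i+1+\ell)/a_i$. The main obstacle is the middle step: accurately tracking the Jacobian shift. The precise appearance of $k_i$---rather than $0$---in the shift is exactly Lichtin's refinement of Kashiwara's argument, and it must be handled carefully, either through the transfer bimodule directly or by passing to right $\mathcal{D}$-modules twisted by canonical sheaves so that the relative volume form $J_\pi\,dy$ is automatically carried along when pulling back $f^s\,dx$.
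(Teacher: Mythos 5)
There is a genuine gap, and it is one of direction. Your plan is to \emph{pull back} the functional equation $b_f(s)f^s=P(s)\cdot f^{s+1}$ to $Y$ and read off constraints on $b_f$ from the monomial computation there. But the displayed identity $b_f(s)g^s=Q(s)\cdot g^{s+1}$ with $Q(s)\in\cD_Y[s]$ is false in general: the transfer bimodule $\cD_{Y\to X}=\cO_Y\otimes_{\pi^{-1}\cO_X}\pi^{-1}\cD_X$ is \emph{not} generated by $1\otimes 1$ over $\cD_Y$ along the exceptional locus (recovering $1\otimes\partial_{x_i}$ from $\partial_{y_j}\cdot(1\otimes 1)=\sum_i\tfrac{\partial(x_i\circ\pi)}{\partial y_j}\otimes\partial_{x_i}$ requires inverting the Jacobian matrix), so $\pi^{-1}\big(\cD_X[s]\cdot f^{s+1}\big)$ does not land in $\cD_Y[s]\cdot g^{s+1}$. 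Worse, even if such an identity held it would prove the wrong inclusion: it would say that the local $b$-function of $g$ on $Y$ divides $b_f$, i.e.\ that the roots computed in the chart are among the roots of $b_f$ --- the reverse of what the theorem asserts. A concrete check: for $f=x^2+y^3$ one has $b_f(s)=(s+1)(s+\tfrac56)(s+\tfrac76)$, while the chart containing only the first exceptional divisor ($a=2$, $k=1$) has local ($\omega$-twisted) $b$-function $(s+1)(s+\tfrac32)$, and $-\tfrac32$ is not a root of $b_f$; so no divisibility of the kind you need can come out of a pullback.

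The correct architecture is the opposite one. Your monomial identity, twisted by the Jacobian, is the right \emph{local input on $Y$}: it computes a polynomial $B(s)$ with $M\cdot B(s)\subseteq M\cdot t$, where $M$ is the right $\cD_Y\langle s,t\rangle$-module generated by $\pi^*(dx)\otimes F^s$, and the roots of $B$ are of the form $-(k_i+\ell)/a_i$ with $1\leq\ell\leq a_i$. One must then \emph{push this relation forward} through $\int^0_\pi$ and compare $\int^0_\pi M$ with the cyclic module generated by $dx\otimes f^s$ on $X$. That comparison is the substance of the Kashiwara--Lichtin proof and is entirely absent from your proposal: one needs (a) the canonical section $w$ of $\int^0_\pi M$ agreeing with $dx\otimes f^s$ off the discriminant, and a surjection $w\cdot\cD_X\langle s,t\rangle\to\omega_X\otimes\cD_X[s]f^s$, which uses torsion-freeness of $\widetilde{B}_f$; and (b) holonomicity of the quotient $(\int^0_\pi M)/(w\cdot\cD_X\langle s,t\rangle)$, proved via characteristic-variety and isotropy estimates, which yields $(\int^0_\pi M)\cdot t^N\subseteq w\cdot\cD_X\langle s,t\rangle$ for some $N$. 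Only then does one get $b_f(s)\mid\prod_{j=0}^{N}B(-s-1-j)$, and it is precisely this step --- not the "$m\geq 0$" you inserted into the monomial computation --- that produces the unbounded nonnegative shifts $\ell$ in the statement. As written, your argument neither establishes a valid functional equation on $Y$ nor returns from $Y$ to $X$, so the conclusion does not follow.
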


A consequence of the above theorem is that every root of $b_f(s)$ is bounded above by $-\min_i\frac{k_i+1}{a_i}$. 
We note that the invariant $\min_i\frac{k_i+1}{a_i}$ is independent of the choice of log resolution; it is the \emph{log canonical threshold}
${\rm lct}(f)$ of $f$. Using an argument based on integration by parts that goes back to \cite{Bernstein}, Koll\'{a}r showed in \cite[Theorem~10.6]{Kollar} that 
$-\lct(f)$ is a root of
$b_f(s)$; therefore the largest root is precisely $-\lct(f)$.
For basic facts about log canonical thresholds, we refer to \cite[Chapter 9]{Lazarsfeld}.

In this note we follow the same approach to give similar estimates for other $b$-functions related to $f$.
These are associated to certain elements of 
$$\widetilde{B}_f:=\cO_X[1/f,s]f^s.$$
Note that this is a ${\mathcal D}_X\langle t,\partial_t\rangle$-module, with $t$ acting as the automorphism that maps $P(s)f^s$ to $P(s+1)f^{s+1}=\big(P(s+1)f\big)f^s$,
and $\partial_t$ acting as $-st^{-1}$.
The module $\widetilde{B}_f$ plays an important role in Malgrange's description in \cite{Malgrange} of the nearby cycle sheaf of $f$ on the level of 
${\mathcal D}$-modules. Using the existence of the Bernstein-Sato polynomial and the rationality of its roots, Malgrange constructed the
$V$-filtration on $\widetilde{B}_f$, whose successive quotients are related to the nearby cycles sheaf (see the next section for a few more details about the $V$-filtration). 

The existence of the $V$-filtration easily implies the existence of a $b$-function for every element $u\in \widetilde{B}_f$. This is the monic polynomial of
smallest degree such that 
\begin{equation}\label{eq_def_gen_b_function}
b_u(s)u\in {\mathcal D}_X\langle t,\partial_tt\rangle \cdot tu.
\end{equation}
In turn, it was shown by Sabbah \cite{Sabbah} that the $V$-filtration can be described in terms of the roots of $b$-functions
(see Theorem~\ref{thm_Sabbah} below for the precise statement). We note that with this notation, the Bernstein-Sato polynomial $b_f(s)$
is equal to $b_{u}(s)$, where $u=f^s$.

We are concerned with the roots of the $b$-function for elements of $\widetilde{B}_f$ of the form $g\partial_t^mf^s$, where $g\in\cO_X(X)$
is another nonzero regular function on $X$ and $m$ is a nonnegative integer. Our main result is the 
following extension of Lichtin's theorem. 
For every log resolution of $f$ as above, we put $b_i={\rm ord}_{E_i}(g)$, where ${\rm ord}_{E_i}$ is the valuation associated
to the divisor $E_i$ on the resolution.

\begin{thm}\label{rootbound}
With the above notation, if $u=g\partial_t^mf^s$, then the following hold:
\begin{enumerate}
\item[i)] Every root of $b_u$ is $\leq -\min\left\{1, -m+\min_i\frac{k_i+1+b_i}{a_i}\right\}$.
\item[ii)] If $m=0$, then every root of $b_u$ is 
$\leq -\min\left\{\frac{k_i+1+b_i}{a_i}\mid 1\leq i\leq r\right\}$.
\item[iii)] If $g=1$, then every root of $b_u$ is either a negative integer or of the form
$m-\frac{k_i+1+\ell}{a_i}$
for some $i$ and some nonnegative integer $\ell$. Furthermore, if we assume in addition that the divisor $D$ defined by $f$ is reduced and the strict transform
$\widetilde{D}$ of $D$ on $Y$ is smooth, we may only consider those $i$ with $E_i$ exceptional.
\end{enumerate}
\end{thm}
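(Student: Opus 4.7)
The plan is to adapt the Kashiwara-Lichtin integration strategy underlying Theorem~\ref{thm_Lichtin}. The starting observation is the identity $\partial_t^m f^s = (-1)^m s(s-1)\cdots(s-m+1) f^{s-m}$ in $\widetilde{B}_f$, which expresses $u = g\partial_t^m f^s$ as an explicit polynomial in $s$ times $g f^{s-m}$. Pairing the defining relation $b_u(s) u = Q \cdot tu$ against a compactly supported test function $\phi$ on $X$ produces a meromorphic integral
\[
I_u(s) \;=\; (-1)^m\, s(s-1)\cdots(s-m+1) \int_X \phi\cdot \overline{g}\cdot |f|^{2(s-m)}\, dx\wedge d\overline{x},
\]
whose poles must include the roots of $b_u(s)$.

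After possibly refining $\pi\colon Y\to X$ so that $g\circ\pi$ also has SNC support (which leaves the quantities $\min_i(k_i+1+b_i)/a_i$ unchanged), I pull $I_u(s)$ back to $Y$ and analyze it in local coordinates at a point where $f\circ\pi = (\text{unit})\prod y_j^{a_j}$, $g\circ\pi = (\text{unit})\prod y_j^{b_j}$, and $\pi^*(dx\wedge d\overline{x}) = |\text{unit}|^2\prod |y_j|^{2k_j}\, dy\wedge d\overline{y}$. The integral factors into one-variable Mellin integrals of $|y_j|^{2(a_j(s-m)+b_j+k_j)}$ against smooth compactly supported functions, producing simple poles at $s = m - (k_j + 1 + b_j + \ell)/a_j$ for $\ell\in\Z_{\geq 0}$. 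This identifies the Lichtin-type candidate root set.

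For part (ii) ($m = 0$), the largest candidate is $-\min_j(k_j+1+b_j)/a_j$, giving the bound directly. For part (i), the same analysis yields $m - \min_j(k_j+1+b_j)/a_j$ as a bound from the candidate set; the additional floor $-1$ reflects that, for $m\geq 1$, the Lichtin-type bound is only sharp above $-1$, and below that level a more elementary argument using that the defining relation involves $tu$ (which corresponds to a shift of $s$ by $1$) forces the cap at $-1$. For part (iii) ($g = 1$, so $b_j = 0$): the explicit prefactor $s(s-1)\cdots(s-m+1)$ cancels any candidate pole landing in $\{0,1,\ldots,m-1\}$; the surviving poles are of the stated form $m - (k_i + 1 + \ell)/a_i$, while the ``negative integer'' possibility records those roots that remain after the cancellation reduces a candidate pole to a purely polynomial (hence integer) contribution. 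For the refined statement under the hypothesis that $D$ is reduced and $\widetilde{D}$ is smooth: at a non-exceptional component $E_i$, the map $\pi$ is locally an isomorphism, so $a_i = 1$ and $k_i = 0$, and the candidates $m-1-\ell$ from such $E_i$ are then integers, either cancelled (if in $\{0,\ldots,m-1\}$) or negative integers covered by the other clause; hence only exceptional $E_i$ contribute genuinely new non-integer candidates.

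The main obstacle is the delicate bookkeeping required to justify precisely which candidate poles get cancelled in (iii), and to pin down the origin of the floor $-1$ in (i). Both require a careful local analysis of the pullback integral and of its interplay with the polynomial prefactor $s(s-1)\cdots(s-m+1)$; in particular, one must track how the Weyl commutation $[\partial_t, t] = 1$ propagates through iterated applications of the $b$-function relation, and verify that shifting $s\mapsto s-m$ commutes with the Mellin pole analysis in the expected way.
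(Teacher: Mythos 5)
There is a fundamental gap at the very first step: you assert that pairing the relation $b_u(s)u = Q\cdot tu$ against a test function produces a meromorphic integral ``whose poles must include the roots of $b_u(s)$.'' This reverses the actual implication. The functional equation gives the meromorphic continuation of $I_u(s)$ and shows that its poles are \emph{contained in} $\{\text{roots of } b_u\}-\Z_{\geq 0}$; it does not show that every root of $b_u$ occurs as a pole. A root can fail to be a pole (the integral may vanish or the residue may cancel for every test function), and the statement that all roots of the $b$-function are poles of the archimedean zeta function is not available --- only the largest root $-\lct_g(f)$ is known to be a pole, via Koll\'ar's positivity/integrability argument, which is exactly what the paper uses for Corollary~\ref{equality} but which does not extend to the other roots. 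Consequently, computing the candidate poles of the pullback integral on $Y$ via Mellin analysis bounds the \emph{poles} of $I_u(s)$, not the \emph{roots} of $b_u(s)$, and the theorem does not follow. (This is also why Lichtin's Theorem~\ref{thm_Lichtin} is not proved analytically: both Kashiwara's and Lichtin's arguments, and the paper's, are $\cD$-module-theoretic.)

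The paper's route is entirely algebraic: it first computes an explicit multiple of the $b$-function in the SNC case (Lemma~\ref{lem3}, using $\partial_{x_1}^{a_1}\cdots\partial_{x_n}^{a_n}$ applied to $tgf^s$ and the identity $(s+1)\partial_t^m t=(s-m+1)t\partial_t^m$, which is where the cap at $-1$ in part i) genuinely comes from), and then transports this bound from $Y$ down to $X$ via the $\cD$-module push-forward $\int_\pi^0 M$. The key points there are the construction of a canonical section $w$ of $\int_\pi^0M$ restricting to $u^*$ over the locus where $\pi$ is an isomorphism, a surjection $L=w\cdot\cD_X\langle s,t\rangle\to\omega_X\otimes N_{f,m}(g)$ obtained from torsion-freeness, and the holonomicity of $(\int_\pi^0M)/L$ (via characteristic variety estimates from Kashiwara's paper), which yields $(\int_\pi^0 M)t^N\subseteq L$ and hence $b_{u^*}(s)\mid\prod_{j=0}^NB(s-j)$. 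If you want to repair your argument, you would need to replace the analytic pairing by some mechanism of this kind that genuinely propagates a divisibility of $b$-functions from $Y$ to $X$; the integral formula cannot do this. Your identity $\partial_t^mf^s=(-1)^ms(s-1)\cdots(s-m+1)f^{s-m}$ is correct, and your observation about non-exceptional components in the reduced case ($a_i=1$, $k_i=0$) matches the paper's, but these do not rescue the main step.
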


We note that while the result in i) covers the most general situation, the sharper upper bounds in ii) and iii) are the ones
that we will need for applications. The first application concerns $b$-functions of the form $b_{gf^s}$. 
Given $f$ and $g$ as above, let us put
$${\rm lct}_g(f)=\min\{\lambda>0\mid g\not\in {\mathcal I}(f^{\lambda})\},$$
where ${\mathcal I}(f^{\lambda})$ denotes the multiplier ideal of $f$, with exponent $\lambda$
(for basic facts about multiplier ideals, see \cite[Chapter 9]{Lazarsfeld}). Note that for $g=1$, we recover
${\rm lct}(f)$. It is an immediate consequence of the definition of multiplier ideals that we have
$${\rm lct}_g(f)=\min_i\frac{k_i+1+b_i}{a_i}.$$
We thus see that Theorem~\ref{rootbound}ii) implies that every root of $b_{gf^s}$ is bounded above by $-{\rm lct}_g(f)$.

On the other hand, recall that Koll\'{a}r's integrability argument was refined in \cite{ELSV} to show that every jumping number of $f$
that lies in $(0,1]$ is a root of $b_f$. A small modification of the argument in \emph{loc. cit.} allows us to show that 
$-{\rm lct}_g(f)$ is a root 
of $b_{gf^s}$. We thus have

\begin{cor}\label{equality}
Given $f$ and $g$ as above, the largest root of $b_{gf^s}$ is $-\lct_g(f)$.
\end{cor}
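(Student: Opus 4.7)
The plan is to prove the two matching inequalities separately. The upper bound is immediate from Theorem~\ref{rootbound}(ii) applied to $u = gf^s$ (i.e., $m=0$): every root of $b_{gf^s}$ is at most $-\min_i\frac{k_i+1+b_i}{a_i}$, which equals $-\lct_g(f)$ by the formula recorded just before the corollary.

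For the matching lower bound I would adapt the Koll\'ar-style integration-by-parts argument of \cite{ELSV} to the twisted setting. Write $b(s) := b_{gf^s}$ and $c := \lct_g(f)$. Unwinding \eqref{eq_def_gen_b_function} using that $\partial_t t$ acts as $-s$ on $\widetilde{B}_f$ (and that $t$ takes $gf^{s+1}$ to $gf^{s+2}$), the defining relation $b(s)\cdot gf^s \in \cD_X\langle t,\partial_tt\rangle \cdot gf^{s+1}$ is equivalent to the classical Bernstein-Sato functional equation
\[
b(s)\cdot gf^s \;=\; P(s)\bigl(gf^{s+1}\bigr)
\]
for some $P(s) \in \cD_X[s]$. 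Multiplying by $\bar g\,\bar f^s$, which commutes with the holomorphic operator $P(s)$, yields the identity of distributions $b(s)\,|g|^2|f|^{2s} = P(s)\bigl[|g|^2\,f\,|f|^{2s}\bigr]$. Pairing with a test function $\phi \geq 0$ and integrating by parts on the right,
\[
b(s)\,Z(s) \;=\; \int_X |g|^2\,f\,|f|^{2s}\cdot P^*(s)\phi, \qquad Z(s) := \int_X |g|^2|f|^{2s}\,\phi.
\]

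A local computation on a log resolution shows the right-hand side is absolutely convergent (and holomorphic in $s$) on the half-plane $\Re(s) > -c - \tfrac12$, while $Z(s)$ is a priori only holomorphic on $\Re(s) > -c$. Hence, if $-c$ were not a root of $b(s)$, dividing by $b(s)$ would extend $Z(s)$ holomorphically across $s=-c$. To force a contradiction, I would pick a point $p\in X$ at which $|g|^2|f|^{-2c}$ fails to be locally integrable -- such a point exists since $g\notin\cI(f^c)$ -- and choose $\phi \geq 0$ supported in a neighborhood of $p$ with $\phi(p)>0$. Then by monotone convergence $Z(s)\to+\infty$ as $s\to -c^+$ along the real axis, contradicting the holomorphic extension. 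The main technical point is the interplay of integrability thresholds on the two sides of the identity; beyond that, the argument is a direct extension of \cite{ELSV}, with $g$ carried as a multiplicative factor throughout.
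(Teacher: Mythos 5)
Your proposal is correct, and its overall strategy is the same as the paper's: the upper bound on the roots is exactly Theorem~\ref{rootbound}ii), and the matching root at $-\lct_g(f)$ is produced by the integration-by-parts argument of \cite{Kollar} and \cite{ELSV} carried out with the extra factor $g$. The two write-ups differ only in how that second half is run. The paper specializes immediately to the real value $s=-\alpha$ with $\alpha=\lct_g(f)$: assuming $b(-\alpha)\neq 0$, it divides by $b(-\alpha)$ to get $gf^{-\alpha}=Q\cdot gf^{1-\alpha}$, multiplies this identity by its complex conjugate so that the operator becomes $R=Q\overline{Q}$ and the right-hand side becomes $R\cdot\big(|g|^2|f|^{2(1-\alpha)}\big)$ (a gain of a full $|f|^2$), and then a single application of Stokes against a compactly supported $\varphi$ shows directly that $|g|^2|f|^{-2\alpha}$ is locally integrable, contradicting $g\notin\cI(f^{\alpha})$. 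You instead keep $s$ complex, use a single $P(s)$ (gaining only $|f|$, hence convergence of the right-hand side for $\Re(s)>-c-\tfrac{1}{2}$, which still suffices to cross the threshold), and derive the contradiction from holomorphic continuation of $Z(s)$ past $s=-c$ combined with the monotone-convergence blow-up. Both routes work; the paper's version is slightly more economical in that it never needs holomorphy in $s$ of either side, only finiteness of one integral at the fixed exponent, whereas yours is closer to the classical ``poles of $|f|^{2s}$'' picture. The points you leave implicit --- that the algebraic identity in $\widetilde{B}_f$ yields an identity of distributions for $\Re(s)\gg 0$ because $|f|^{2s}$ is then of class ${\mathcal C}^k$, that the identity $b(s)Z(s)=\int|g|^2f|f|^{2s}\,P^*(s)\phi$ then propagates to $\Re(s)>-c$ by the identity theorem, and that the monotone-convergence step should be localized to a neighborhood of $p$ where $|f|\leq 1$ --- are all routine.
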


Using Sabbah's description of the $V$-filtration $V^{\bullet}\widetilde{B}_f$ on $\widetilde{B}_f$ in terms of roots of $b$-functions, this translates as 
the following result due to Budur and Saito, see \cite[Theorem~0.1]{BS}.

\begin{cor}\label{BSthm}
For every nonzero $f\in\cO_X(X)$ and every positive $\alpha\in\Q$, we have
$$\{g\in\cO_X\mid gf^s\in V^{\alpha}\widetilde{B}_f\}={\mathcal I}(f^{\alpha-\epsilon})\quad\text{for}\quad 0<\epsilon\ll 1.$$
\end{cor}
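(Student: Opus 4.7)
The plan is to translate the membership condition $gf^s \in V^\alpha \widetilde{B}_f$ into a statement about the roots of the $b$-function $b_{gf^s}(s)$, and then identify the resulting numerical condition with the defining condition for the multiplier ideal via Corollary~\ref{equality}.

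First I would invoke Sabbah's description of the $V$-filtration (Theorem~\ref{thm_Sabbah}), which for an arbitrary element $u \in \widetilde{B}_f$ characterizes $u \in V^{\alpha}\widetilde{B}_f$ by the condition that every root of $b_u(s)$ is $\leq -\alpha$. Applied to $u = gf^s$, this gives
\[
gf^s \in V^{\alpha}\widetilde{B}_f \iff \text{every root of } b_{gf^s}(s) \text{ is } \leq -\alpha \iff \alpha \leq -(\text{largest root of }b_{gf^s}).
\]
By Corollary~\ref{equality}, the largest root of $b_{gf^s}$ is exactly $-\lct_g(f)$. Therefore
\[
gf^s \in V^{\alpha}\widetilde{B}_f \iff \alpha \leq \lct_g(f).
\]

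Next I would unwind the right-hand side. By the very definition
\[
\lct_g(f) = \min\{\lambda > 0 \mid g \notin \mathcal{I}(f^{\lambda})\},
\]
together with the standard fact that the set $\{\lambda > 0 : g \in \mathcal{I}(f^{\lambda})\}$ is an open interval of the form $(0, \lct_g(f))$ (this follows from the computation $\mathcal{I}(f^{\lambda}) = \pi_*\mathcal{O}_Y(K_{Y/X} - \sum_i \lfloor \lambda a_i \rfloor E_i)$ on a fixed log resolution, which makes the condition locally constant in $\lambda$ on the complement of a discrete set). Thus
\[
g \in \mathcal{I}(f^{\alpha - \epsilon}) \text{ for } 0 < \epsilon \ll 1 \iff \alpha \leq \lct_g(f),
\]
and combining the two equivalences yields exactly the desired equality of sets.

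Almost all of the substantive work has been done in Theorem~\ref{rootbound}(ii) and Corollary~\ref{equality}; the main step here is just matching the two characterizations, and no genuine obstacle arises beyond carefully handling the difference between the strict inequality $\alpha - \epsilon < \lct_g(f)$ in the multiplier-ideal formulation and the weak inequality $\alpha \leq \lct_g(f)$ in the $V$-filtration formulation. Since $\lct_g(f)$ is rational (being $\min_i (k_i + 1 + b_i)/a_i$ on a log resolution), taking $\epsilon > 0$ sufficiently small makes the two conditions coincide, which completes the argument.
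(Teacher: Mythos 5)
Your argument is correct and is essentially identical to the paper's proof: both pass from $gf^s\in V^{\alpha}\widetilde{B}_f$ to the root condition via Theorem~\ref{thm_Sabbah}, identify the largest root as $-\lct_g(f)$ via Corollary~\ref{equality}, and then match $\alpha\leq\lct_g(f)$ with membership in $\cI(f^{\alpha-\epsilon})$ for small $\epsilon$. Your extra remark on the semicontinuity of $\lambda\mapsto\cI(f^{\lambda})$ is a correct (and slightly more explicit) justification of the last equivalence, which the paper leaves implicit.
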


Our second application of Theorem~\ref{rootbound} is towards a lower bound for the minimal exponent $\widetilde{\alpha}(f)$ of $f$. 
Recall that if $f$ is not invertible, then it follows easily from (\ref{eq_def_b_function}) that $b_f(-1)=0$. The negative of the largest root of $b_f(s)/(s+1)$
is the \emph{minimal exponent} $\widetilde{\alpha}(f)$ (the usual convention is that if $b_f(s)=s+1$, which is the case
precisely when $f$ defines a smooth hypersurface, then $\widetilde{\alpha}(f)=\infty$). Note that by the result of Lichtin and Koll\'{a}r discussed above, we have
${\rm lct}(f)=\min\{\widetilde{\alpha}(f),1\}$. 

Using a result due to Saito \cite{Saito-MLCT} which describes $\widetilde{\alpha}(f)$ in terms of the roots of $b_{\partial_t^mf^s}$, we obtain the following 
lower bound for $\widetilde{\alpha}(f)$. We assume that $f$ defines a \emph{reduced} nonzero divisor $D$ and we assume that the strong log resolution 
$\pi\colon Y\to X$ has the property that the strict transform $\widetilde{D}$ is smooth (note that we can obtain such a resolution from an arbitrary one by
performing some extra blow-ups).

\begin{cor}\label{bound_min_exp}
With the above notation, we have
$$\widetilde{\alpha}(f)\geq\min_{i; E_i \text{exceptional}}\frac{k_i+1}{a_i},$$
where the minimum is over the exceptional divisors $E_i$. 
\end{cor}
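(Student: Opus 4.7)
The plan is to apply Theorem~\ref{rootbound}(iii) with $g=1$ and then invoke the result of Saito \cite{Saito-MLCT} that relates the minimal exponent to the roots of $b_{\partial_t^m f^s}(s)$ for $m \geq 0$.

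First, applying Theorem~\ref{rootbound}(iii) with $g = 1$, together with the hypotheses that $D$ is reduced and that the strict transform $\widetilde{D}$ is smooth, every root of $b_{\partial_t^m f^s}(s)$ is either a negative integer or of the form $m - \frac{k_i+1+\ell}{a_i}$ for some $i$ with $E_i$ exceptional and some nonnegative integer $\ell$. In particular, every root of the second (non-integer) type is at most
$$m - \min_{i;\, E_i \text{ exceptional}} \frac{k_i+1}{a_i}.$$

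Next, Saito's theorem expresses $\widetilde{\alpha}(f)$ (equivalently, the negative of the largest root of $\widetilde{b}_f(s) = b_f(s)/(s+1)$) in terms of the collection of the $b$-functions $b_{\partial_t^m f^s}(s)$ as $m$ varies. The upshot that we need is an inequality of the form $\widetilde{\alpha}(f) \geq m - r$ for every ``relevant'' root $r$ of $b_{\partial_t^m f^s}(s)$, where ``relevant'' excludes those integer roots attributable to the factor $(s+1)$ already divided out in the definition of $\widetilde{\alpha}(f)$. For non-integer roots, combining this with the bound of the previous paragraph gives exactly
$$\widetilde{\alpha}(f) \;\geq\; \min_{i;\, E_i \text{ exceptional}} \frac{k_i+1}{a_i},$$
which is the asserted inequality. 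The negative integer roots of $b_{\partial_t^m f^s}(s)$ produce, via Saito's correspondence, either no contribution (when they come from the $(s+1)$-factor already removed) or contributions of size at least~$1$, which are harmless whenever the asserted lower bound is itself at most~$1$; the remaining case, in which $\min_i (k_i+1)/a_i > 1$ over the exceptional locus, only strengthens the bound and is again consistent with part (iii) of Theorem~\ref{rootbound}.

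The main technical obstacle in carrying out this plan is the precise invocation of Saito's formula in order to discard the irrelevant integer roots and extract a clean inequality $\widetilde{\alpha}(f) \geq m - r$ for the non-trivial roots. Once that input is in hand, the geometric content of the estimate is supplied entirely by Theorem~\ref{rootbound}(iii), and the role of the smoothness of $\widetilde{D}$ is precisely to allow the minimum to be taken only over the exceptional divisors $E_i$.
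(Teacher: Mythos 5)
Your approach is the same as the paper's: combine Theorem~\ref{rootbound}(iii) for $u=\partial_t^m f^s$ with Saito's result relating $\widetilde{\alpha}(f)$ to the roots of $b_{\partial_t^m f^s}$. However, the step you yourself flag as the ``main technical obstacle'' is precisely where the proof lives, and as written it contains a gap. The inequality you posit as the ``upshot'' of Saito's theorem --- $\widetilde{\alpha}(f)\geq m-r$ for \emph{every} relevant root $r$ --- is not what that theorem provides and is false as stated (a very negative root $r$ of $b_{\partial_t^m f^s}$ does not force $\widetilde{\alpha}(f)\geq m-r$). The correct input, via \cite[(1.3.8)]{Saito-MLCT} together with Sabbah's Theorem~\ref{thm_Sabbah}, is the equivalence: for $\alpha\in(0,1]$, one has $\widetilde{\alpha}(f)\geq m+\alpha$ if and only if \emph{all} roots of $b_{\partial_t^m f^s}(s)$ are $\leq-\alpha$. (Equivalently one can use the divisibilities $b_{\partial_t^mf^s}(s)\mid (s+1)\widetilde{b}_f(s-m)$ and $\widetilde{b}_f(s-m)\mid b_{\partial_t^mf^s}(s)$ from \cite[Proposition~6.12]{MP}.)

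The missing move is the choice of $m$: write $\min_{i;\,E_i\ \text{exceptional}}\tfrac{k_i+1}{a_i}=m+\alpha$ with $m\in\mathbf{Z}_{\geq 0}$ and $\alpha\in(0,1]$, and apply the criterion for \emph{that} $m$. Then the negative integer roots are automatically $\leq -1\leq-\alpha$ --- no discussion of ``contributions of size at least $1$'' or of whether the asserted bound exceeds $1$ is needed, and indeed your case analysis there does not cohere into an argument (in the case $\min_i(k_i+1)/a_i>1$ you assert the conclusion rather than derive it). The remaining roots are of the form $m-\tfrac{k_i+1+\ell}{a_i}\leq m-(m+\alpha)=-\alpha$ by the definition of the minimum, so all roots are $\leq-\alpha$ and Saito's equivalence yields $\widetilde{\alpha}(f)\geq m+\alpha$, which is the claim. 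With this choice of $m$ and the precise form of Saito's statement inserted, your outline becomes the paper's proof; without them, the argument does not close.
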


This result was proved by the second named author with Popa 
using the theory of Hodge ideals, see
\cite[Corollary~D]{MP}. In fact, the bound follows easily from Lichtin's Theorem~\ref{thm_Lichtin} when $\widetilde{\alpha}(f)$ is 
not an integer, but it does not seem clear how to deduce this in general from that result. We note that while the original proofs of the results in Corollaries~\ref{BSthm}
and \ref{bound_min_exp}
made use of the deep results in Saito's theory of mixed Hodge modules \cite{Saito-MHM}, the arguments in this note only rely on basic results in the theory 
of $b$-functions. 

In the next section we review briefly material related to the $\cD$-module $\widetilde{B}_f$, the $V$-filtration, and $b$-functions. We also include here some easy lemmas
on $b$-functions.
The proof of the main result in Theorem~\ref{rootbound} is given in Section~3. The application to the description of the multiplier ideals via the $V$-filtration 
is discussed in Section~4, while the bound for the minimal exponent is deduced in Section~5.

\subsection{Acknowledgements} The second author is grateful to Mihnea Popa for many discussions related to $b$-functions and $V$-filtrations.

\section{The $\cD$-module $\widetilde{B}_f$}

Let $X$ be a smooth, $n$-dimensional complex algebraic variety. We denote by ${\mathcal D}_X$ the sheaf of differential operators on $X$ and by
$\cD_X\langle t,\partial_t\rangle$ the push-forward to $X$ of the sheaf of differential operators on $X\times {\mathbf A}^1$ (hence $\partial_t$ and $t$
satisfy the commutation relation $[\partial_t,t]=1$ and they commute with the sections of $\cD_X$). We will also consider the subsheaf
$\cD_X\langle \partial_tt, t\rangle$ of $\cD_X\langle t,\partial_t\rangle$ generated over $\cD_X$ by $\partial_tt$ and $t$. 
For basic facts about $\cD$-modules, we refer to \cite{HTT}.

Recall from the introduction that $\widetilde{B}_f=\cO_X[1/f,s]f^s$. This is a free module of rank $1$ over the sheaf $\cO_X[1/f,s]$ of polynomials in $s$ with coefficients
in $\cO_X[1/f]$. Note that $\widetilde{B}_f$ has a natural structure of left $\cD_X$-module, with a derivation $D\in {\mathcal Der}_{\bf C}(\cO_X)$ acting on $f^s$ in the 
expected way:
$$D\cdot f^s=\frac{sD(f)}{f}f^s.$$
This extends to a left action of $\cD_X\langle \partial_tt,t\rangle$ on $B_f$, with $t$ acting by the automorphism
$$P(s)f^s\to P(s+1)f^{s+1}:=\big(P(s+1)f\big)f^s$$
and $-\partial_tt$ acting by multiplication with $s$ (because of this, we also denote by $s$ the operator $-\partial_tt$). 
In order for this to be well-defined, we only need to check that the operators on $\widetilde{B}_f$ that we defined satisfy the commutation relation 
$st=t(s-1)$, which is an easy exercise. Finally, since $t$ acts by an automorphism, we can make $\widetilde{B}_f$ a left module over $\cD_X\langle t,\partial_t\rangle$
by letting $\partial_t$ act as $-st^{-1}$. 

For future reference, we note that for every polynomial $Q$ in one variable we have
\begin{equation}\label{eq1_lem2}
Q(s)t=tQ(s-1)\quad\text{and}\quad \partial_tQ(s)=Q(s-1)\partial_t.
\end{equation}
Indeed, it is enough to check the equalities for $Q(s)=s^j$, when these follow by an easy induction on $j$.

\begin{rmk}
It is well-known (and not hard to see) that $\widetilde{B}_f$ is a free $\cO_X[1/f]$-module, with a basis given by $\partial_t^mf^s$, for $m\geq 0$. 
On this basis, the action of $t$ is given by
\begin{equation}\label{action_of_t}
t\cdot \partial_t^mf^s=f\partial_t^mf^s-m\partial_t^{m-1}f^s
\end{equation}
and the action of a derivation $D\in {\mathcal Der}_{\C}(\cO_X)$ is given by 
$$D\cdot \partial_t^mf^s=-D(f)\partial_t^{m+1}f^s.$$
If we put $B_f:=\bigoplus_{m\geq 0}\cO_X\partial_t^mf^s$, then $B_f$ is a $\cD_X\langle t,\partial_t\rangle$-submodule of $\widetilde{B}_f$.
\end{rmk}

Malgrange constructed in \cite{Malgrange} the \emph{$V$-filtration} $(V^{\alpha}\widetilde{B}_f)_{\alpha}$ on $\widetilde{B}_f$, a certain decreasing filtration parametrized by rational numbers\footnote{Actually, the filtration constructed in \cite{Malgrange} was parametrized by integers. The filtration indexed by ${\mathbf Q}$ was obtained (for more general $\cD$-modules) in \cite{Saito-GM}.}, and uniquely characterized by a certain list of axioms. 
The construction made use of the existence of the Bernstein-Sato polynomial of $f$ and of the rationality of its roots. Using the existence of the $V$-filtration,
one can show that $b$-functions can be associated to arbitrary elements of $\widetilde{B}_f$; moreover, the $V$-filtration can be characterized in terms of the
roots of the general $b$-functions (see Theorem~\ref{thm_Sabbah} below).

Recall from the Introduction that given a section $u$ of $\widetilde{B}_f$, the \emph{$b$-function} of $u$ is the monic polynomial of minimal 
degree $b_u\in {\mathbf C}[s]$ such that 
$$b_u(s)u\in \cD_X\langle s,t\rangle \cdot tu.$$
As we have already mentioned, its existence can be easily deduced from the existence of the $V$-filtration on $\widetilde{B}_f$; moreover, this argument
also implies that all roots of $b_u$ are rational numbers. 

\begin{rmk}\label{rmk_equiv_b_fcn}
Note that if $u=gf^s$ for some $g\in\cO_X(X)$, then $t^ju=f^ju$ for every $j\geq 0$. This implies that $b_u(s)$ is the monic polynomial of 
smallest degree such that
$$b_u(s)u\in\cD_X[s]\cdot fu.$$
In particular, we see that $b_{f^s}$ is the Bernstein-Sato polynomial $b_f$. 
\end{rmk}

\begin{rmk}\label{local_char}
It is clear from the definition of $b$-functions that if $u$ is a section of $\widetilde{B}_f$ on an open subset $V$ of $X$ and if we have
an open cover $V=\bigcup_iV_i$, then $b_u(s)$ is the least common multiple of the polynomials $b_{u\vert_{V_i}}(s)$, for $i\in I$. 
\end{rmk}

Concerning the $V$-filtration, we will only need the following characterization, 
due to Sabbah \cite{Sabbah}.

\begin{thm}\label{thm_Sabbah}
For every $\alpha\in{\mathbf Q}$, we have
$$V^{\alpha}\widetilde{B}_f=\{u\in \widetilde{B}_f\mid \text{all roots of}\,\,b_u(s)\,\text{are}\,\leq -\alpha\}.$$
\end{thm}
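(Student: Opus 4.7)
The plan is to prove the two inclusions separately, working from the standard characterizing properties of the $V$-filtration on $\widetilde{B}_f$: it is a discretely indexed, decreasing, exhaustive filtration by coherent $\cD_X\langle\partial_tt\rangle$-submodules, satisfying $t\cdot V^\alpha\subseteq V^{\alpha+1}$ (with equality for $\alpha$ sufficiently positive), $\partial_t\cdot V^\alpha\subseteq V^{\alpha-1}$, and the property that $s+\alpha=-\partial_tt+\alpha$ is locally nilpotent on $\mathrm{gr}_V^\alpha:=V^\alpha/V^{>\alpha}$, where $V^{>\alpha}:=\bigcup_{\beta>\alpha}V^\beta$.

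For the inclusion $\supseteq$, I would suppose $b_u(s)=\prod_i(s+\beta_i)$ with each $\beta_i\geq\alpha$ and use exhaustivity to pick $\gamma\in\Q$ maximal with $u\in V^\gamma$; the goal is to show $\gamma\geq\alpha$. Assuming for contradiction $\gamma<\alpha$, I would first observe that $V^{\gamma+1}$ is stable under the action of the full ring $\cD_X\langle t,\partial_tt\rangle$: the filtration is already $\cD_X\langle\partial_tt\rangle$-stable, and $t\cdot V^{\gamma+1}\subseteq V^{\gamma+2}\subseteq V^{\gamma+1}$ (note that $\partial_t$ alone is not among the generators of this ring, which is essential). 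Since $tu\in V^{\gamma+1}$, one obtains $b_u(s)u\in\cD_X\langle t,\partial_tt\rangle\cdot tu\subseteq V^{\gamma+1}\subseteq V^{>\gamma}$. Passing to $\mathrm{gr}_V^\gamma$ and writing $s+\beta_i=(\beta_i-\gamma)+(s+\gamma)$, each factor is a nonzero scalar plus a locally nilpotent operator, hence invertible; consequently $b_u(s)$ acts invertibly on $\mathrm{gr}_V^\gamma$, forcing the image $\bar u$ of $u$ to vanish and contradicting the maximality of $\gamma$.

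For the reverse inclusion I would work locally and set $M:=\cD_X\langle t,\partial_tt\rangle\cdot u$, a coherent submodule of $\widetilde{B}_f$; the commutation relation $st=t(s-1)$ from (\ref{eq1_lem2}) gives $tM=\cD_X\langle t,\partial_tt\rangle\cdot tu$, so the quotient $N:=M/tM$ is a coherent $\cD_X[s]$-module generated by the class $\bar u$ of $u$, and $b_u$ is the minimal monic polynomial in $s$ annihilating $\bar u$. Transporting the $V$-filtration from $\widetilde{B}_f$ to $M$ by intersection and pushing down to $N$, the relation $V^\beta\widetilde{B}_f=tV^{\beta-1}\widetilde{B}_f$ for $\beta\gg0$ together with strictness of the $V$-filtration on coherent submodules forces $V^\beta M\subseteq tM$ in high degrees, so the induced filtration on $N$ vanishes above some level. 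Since $\bar u\in V^\alpha N$ and only finitely many jumps of the induced filtration occur in $[\alpha,\infty)$, say at $\beta_1,\ldots,\beta_k$, peeling off one graded piece at a time using local nilpotence of $s+\beta_j$ produces a polynomial $b(s):=\prod_{j=1}^{k}(s+\beta_j)^{m_j}$ with $b(s)\bar u=0$ in $N$, equivalently $b(s)u\in\cD_X\langle t,\partial_tt\rangle\cdot tu$; then $b_u\mid b$ and every root of $b_u$ is of the form $-\beta_j\leq-\alpha$. The main obstacle I foresee is the bookkeeping needed to inherit coherence and discreteness of the $V$-filtration on $N$ from the axioms on $\widetilde{B}_f$; once granted, both implications reduce to local nilpotence on graded pieces together with exhaustivity.
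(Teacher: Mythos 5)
The paper does not actually prove this statement: it is quoted from Sabbah's article as a known characterization of the $V$-filtration, so there is no in-paper argument to compare yours against. What you propose is essentially the standard derivation of Sabbah's theorem from the defining axioms of the Malgrange--Kashiwara filtration. The inclusion $\supseteq$ is complete as written: the stability of $V^{\gamma+1}$ under $\cD_X\langle t,\partial_tt\rangle$ (crucially not under $\partial_t$ alone, as you note), the resulting containment $b_u(s)u\in V^{\gamma+1}\subseteq V^{>\gamma}$, and the invertibility of $s+\beta_i$ on the locally nilpotent module $\mathrm{gr}_V^{\gamma}$ when $\beta_i\geq\alpha>\gamma$ are all correct; discreteness and left-continuity of the filtration justify both the existence of the maximal $\gamma$ and the final contradiction $u\in V^{>\gamma}=V^{\gamma'}$ with $\gamma'>\gamma$. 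Note also that this direction, combined with the other, yields as a byproduct the existence and rationality of the roots of $b_u$, which the paper only asserts.

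In the inclusion $\subseteq$, the outline is right but the step you yourself flag, namely $V^{\beta}M\subseteq tM$ for $\beta\gg 0$, is the genuine technical content and is not yet established. It does not follow formally from $tV^{\beta-1}\widetilde{B}_f=V^{\beta}\widetilde{B}_f$, since a priori $M\cap tV^{\beta-1}\widetilde{B}_f$ could be strictly larger than $t\bigl(M\cap V^{\beta-1}\widetilde{B}_f\bigr)$. The standard way to close this is the Artin--Rees lemma for $V$-filtrations: because $M=\cD_X\langle t,\partial_tt\rangle\cdot u$ is coherent over $\cD_X\langle t,\partial_tt\rangle$, the induced filtration $V^{\beta}M=M\cap V^{\beta}\widetilde{B}_f$ is a \emph{good} $V$-filtration, and any good $V$-filtration on such a module satisfies $tV^{\beta}M=V^{\beta+1}M$ for $\beta\gg 0$, whence $V^{\beta}M\subseteq tM$ in high degrees. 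With that input, the transport to $N=M/tM$, the finiteness of jumps in $[\alpha,\infty)$, and the peeling argument via local nilpotence of $s+\beta_j$ on $\mathrm{gr}^{\beta_j}N$ all go through, and you correctly conclude $b_u\mid\prod_j(s+\beta_j)^{m_j}$. So the proposal is a correct and standard proof modulo this one acknowledged coherence statement, which should be cited or proved rather than described as bookkeeping.
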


\begin{rmk}\label{reduced_b_function}
Suppose that $g\in\cO_X(X)$ is such that $g/f$ is not a regular function. In this case, $b_{gf^s}(-1)=0$. 
Indeed, it follows from Remark~\ref{rmk_equiv_b_fcn} that we have 
$$b_{gf^s}(s)gf^s\in \cD_X[s]\cdot (gf)f^s.$$
By specializing to $s=-1$, we get $b_{gf^s}(-1)g/f\in \cD_X\cdot g\subseteq\cO_X$.
Since $g/f$ is not a regular function, it follows that $b_{gf^s}(-1)=0$. The \emph{reduced $b$-function} of $gf^s$
is 
$$\widetilde{b}_{gf^s}(s)=b_{gf^s}(s)/(s+1).$$
\end{rmk}

We now give a few easy results about $b$-functions that will be needed in the next section for the 
proof of our main result. We begin with the following lemma, which extends the well-known fact that the $b$-function of $f$ only depends on the ideal generated by $f$.

\begin{lem}\label{lem1}
Let $p,q\in\cO_X(X)$ be invertible functions and let $h\in\cO_X(X)$ be nonzero. If 
$g=pf$ and we consider 
$u=h\partial_t^mf^s\in \widetilde{B}_f$ and $v=qh\partial_t^mg^s\in \widetilde{B}_g$, then $b_u=b_v$.
\end{lem}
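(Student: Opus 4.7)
The plan is to prove $b_v \mid b_u$; the reverse divisibility then follows by symmetry, swapping $f$ and $g$ via $f = p^{-1} g$. The key input is the explicit formula $\partial_t^m f^s = (-1)^m (s)_m f^{-m} f^s$, with $(s)_m = s(s-1)\cdots(s-m+1)$ the falling factorial, proved by induction from $\partial_t f^s = -s f^{-1} f^s$. This identifies $u$ with $(-1)^m h(s)_m f^{-m} f^s$ and, by iterating the action of $t$, identifies $t^\beta u$ with $(-1)^m h(s+\beta)_m f^{\beta-m} f^s$ inside $\widetilde{B}_f$; analogous formulas hold for $v$ and $t^\beta v$ in $\widetilde{B}_g$.

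First, any relation $b(s) u \in \cD_X\langle t, s\rangle \cdot tu$ can be rewritten, after moving all $t$'s to the right using $st = t(s-1)$, as a finite sum $b(s) u = \sum_{\beta \geq 1} P_\beta(x,\partial_x,s) \, t^\beta u$ with $P_\beta \in \cD_X[s]$. For each integer $j$, specializing $s = j$ and using the $\cD_X$-linear isomorphism $\widetilde{B}_f/(s-j) \widetilde{B}_f \cong \cO_X[1/f]$ sending $f^s \mapsto f^j$, this yields, after cancelling $(-1)^m$, the identity
\[
b(j)\, h(j)_m f^{j-m} = \sum_\beta P_\beta(j) \cdot h(j+\beta)_m f^{j+\beta-m} \quad \text{in } \cO_X[1/f].
\]
Multiplying both sides by the invertible function $qp^{j-m}$, the left-hand side becomes $b(j)\, qh(j)_m g^{j-m}$, and each term on the right becomes $Q_\beta(j) \cdot qh(j+\beta)_m g^{j+\beta-m}$ where I set $Q_\beta(j) := qp^{j-m}\, P_\beta(j)\, q^{-1} p^{-(j+\beta-m)}$.

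The main obstacle is to verify that $Q_\beta(j)$ is polynomial in $j$, so that it arises as the specialization at $j$ of some $Q_\beta(s) \in \cD_X[s]$. This is a Leibniz-rule computation: expanding $P_\beta(j) = \sum a_\alpha(x, j) \partial^\alpha$ and commuting $\partial^\alpha$ past $q^{-1} p^{-(j+\beta-m)}$ produces terms of the form $(\text{polynomial in } j) \cdot p^{-(j+\beta-m+\ell)} \cdot (\text{derivatives of } p \text{ and } q)$; multiplying on the left by $qp^{j-m}$ collapses the $p$-exponent to $p^{-(\beta+\ell)} \in \cO_X$, the exponential in $j$ cancelling precisely because $p$ is invertible. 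Granting this, the resulting identity holds in $\cO_X[1/g]$ for every integer $j$, and since both sides are polynomials in $s$ with coefficients in $\cO_X[1/g]$ when written as multiples of $g^s$, it lifts to an identity in $\widetilde{B}_g$. Rewriting $qh(s+\beta)_m g^{s+\beta-m} = (-1)^m t^\beta v$ and dividing by $(-1)^m$ gives $b(s) v = \sum_\beta Q_\beta(s)\, t^{\beta-1} \cdot tv \in \cD_X\langle t, s\rangle \cdot tv$, so $b_v \mid b$; taking $b = b_u$ finishes one direction, and symmetry completes the proof.
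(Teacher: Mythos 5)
Your argument is correct, and it reaches the conclusion by a genuinely different technical route than the paper, although the underlying idea --- that replacing $f$ by $g=pf$ amounts to conjugating the functional equation by $p^s$, while the invertible factors $q$ and $p^{-m}$ are harmless --- is the same. The paper implements the conjugation formally: it defines a twisted $\cD_X\langle t,\partial_t\rangle$-structure $\star$ on $\widetilde{B}_f$ (with $D\star\beta=\bigl(D+sD(p)p^{-1}\bigr)\beta$, $t\star\beta=(pt)\beta$, $\partial_t\star\beta=(p^{-1}\partial_t)\beta$), checks that $P(s)g^s\mapsto P(s)f^s$ is an isomorphism $\widetilde{B}_g\simeq\widetilde{B}_f^{\star}$ carrying $v$ to $qp^{-m}h\partial_t^mf^s$, and concludes because $\cD_X\langle s,t\rangle\,t\star w=\cD_X\langle s,t\rangle\,tw$ and because multiplying a cyclic generator by an invertible function does not change the submodule it generates. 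You instead expand everything in the $\cO_X[1/f,s]$-basis $f^s$ via $\partial_t^mf^s=(-1)^m s(s-1)\cdots(s-m+1)f^{s-m}$, specialize $s$ to integers $j$, conjugate the coefficient operators by the genuine functions $qp^{j-m}$ --- the essential point being that $p^{c}\partial p^{-c}=\partial-cp^{-1}\partial(p)$ depends polynomially on $c$, so $Q_\beta(j)$ lifts to some $Q_\beta(s)\in\cD_X[s]$ --- and recover the identity in $\widetilde{B}_g$ by interpolation. All the steps you rely on do hold: the specialization $\widetilde{B}_f\to\cO_X[1/f]$, $P(s)f^s\mapsto P(j)f^j$, is $\cD_X$-linear; the conjugated operators are polynomial in $j$; a polynomial identity in $s$ with coefficients in $\cO_X[1/g]$ valid at all integers holds identically; and the symmetry reduction to one divisibility is legitimate (apply the forward direction to $g=pf$ rewritten as $f=p^{-1}g$, with $h$ replaced by $qh$ and $q$ by $q^{-1}$). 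The paper's version is shorter and avoids the specialization--interpolation detour; yours is more elementary and makes the conjugation-by-$p^s$ mechanism completely explicit instead of verifying the axioms of a twisted module structure.
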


\begin{proof}
Let $\widetilde{B}_f^{\star}$ be equal to $\widetilde{B}_f$ as a sheaf of $\cO_X$-modules, but with a new $\cD_X\langle t,\partial_t\rangle$-action, 
denoted $\star$, given for every $\beta\in \widetilde{B}_f$ by
\begin{enumerate}
\item[i)] $D\star \beta=\big(D+sD(p)p^{-1}\big)\beta$ for every $D\in {\mathcal Der}_{\C}(\cO_X)$.
\item[ii)] $t\star\beta=(pt)\beta$.
\item[iii)] $\partial_t\star\beta=(p^{-1}\partial_t)\beta$.
\end{enumerate}
It is easy to check that this gives indeed a $\cD_X\langle t,\partial_t\rangle$-action.
Note that the new action of $s$ coincides with the old one.

It is straightforward to check that the map
$$\nu\colon \widetilde{B}_g\to \widetilde{B}_f^{\star},\,\, \nu\big(P(s)g^s\big)=P(s)f^s$$
is an isomorphism of $\cD_X\langle t,\partial_t\rangle$-modules that maps
$v$ to $qp^{-m}h\partial_t^mf^s$. It follows that $b_v(s)$ is the monic polynomial $b(s)$ of minimal degree that satisfies
$$b(s)qp^{-m}h\partial_t^mf^s\in \cD_X\langle s,t\rangle t\star qp^{-m}h\partial_t^{m}f^s.$$
Since for every $w\in \widetilde{B}_f$, we have
$$\cD_X\langle s,t\rangle t\star w=\cD_X\langle s,t\rangle tw$$
and for every invertible $\varphi\in\cO_X(X)$, we have
$$\cD_X\langle s,t\rangle\varphi w=\cD_X\langle s,t\rangle w=\varphi\cD_X\langle s,t\rangle w,$$
we deduce that $b(s)=b_u(s)$.
\end{proof}

\begin{lem}\label{lem2}
If $g\in\cO_X(X)$ is such that $\frac{g}{f}$ is not a regular function, then for every $m\geq 0$, we have
$$b_{g\partial_t^mf^s}(s)\vert (s+1)\widetilde{b}_{gf^s}(s-m).$$
\end{lem}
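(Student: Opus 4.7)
The plan is to leverage the defining equation for $b_{gf^s}$ and transport it to $u := g\partial_t^m f^s$ by applying $\partial_t^m$. By Remark~\ref{rmk_equiv_b_fcn}, there exists $R(s)\in\cD_X[s]$ with $b_{gf^s}(s)\cdot gf^s = R(s)\cdot fgf^s$. Iterating (\ref{eq1_lem2}) to move $\partial_t^m$ past the polynomial in $s$, and using that $f\in\cO_X$ commutes with $\partial_t$, the result of applying $\partial_t^m$ to both sides is
$$b_{gf^s}(s-m)\,u = R(s-m)\,fu.$$

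The key step will be the clean identity $fu = -(s-m+1)\,u_{m-1}$, where $u_{m-1} := g\partial_t^{m-1}f^s$ (the case $m=0$ is a tautology, since then $fu = tu$ and the conclusion is immediate). I would verify this by combining the formula $tu = fu - m\,u_{m-1}$ from (\ref{action_of_t}) with the relation $tu = -(s+1)\,u_{m-1}$; the latter is simply $t\partial_t = -(s+1)$ applied to $u_{m-1}$, using $\partial_t u_{m-1} = u$. Since $g/f$ is not regular, Remark~\ref{reduced_b_function} gives $b_{gf^s}(s)=(s+1)\widetilde{b}_{gf^s}(s)$, so $b_{gf^s}(s-m)=(s-m+1)\widetilde{b}_{gf^s}(s-m)$, and substituting into the displayed equation yields
$$(s-m+1)\,\widetilde{b}_{gf^s}(s-m)\,u = -(s-m+1)\,R(s-m)\,u_{m-1}.$$

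Because $\widetilde{B}_f$ is a free module over the domain $\cO_X[1/f,s]$, multiplication by the nonzero polynomial $s-m+1$ is injective, so we may cancel this factor to obtain
$$\widetilde{b}_{gf^s}(s-m)\,u = -R(s-m)\,u_{m-1}.$$
Multiplying by $(s+1)$, commuting it past $R(s-m)$ (both lie in $\cD_X[s]$), and using $(s+1)\,u_{m-1} = -tu$, we arrive at
$$(s+1)\,\widetilde{b}_{gf^s}(s-m)\,u = R(s-m)\,tu \;\in\; \cD_X\langle s,t\rangle\cdot tu,$$
which is exactly the divisibility $b_u(s)\mid (s+1)\widetilde{b}_{gf^s}(s-m)$.

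The main subtle point is the cancellation of $(s-m+1)$, which rests on the $s$-torsion-freeness of $\widetilde{B}_f$. The linchpin is the identity $fu = -(s-m+1)\,u_{m-1}$: it produces precisely the factor $(s-m+1)$ matching the one sitting inside $b_{gf^s}(s-m)$, and after cancellation the relation $(s+1)\,u_{m-1} = -tu$ reintroduces the $(s+1)$ factor on the right-hand side and places everything inside $\cD_X\langle s,t\rangle\cdot tu$.
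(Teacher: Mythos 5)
Your argument is correct and follows essentially the same route as the paper's proof: both start from the functional equation $b_{gf^s}(s)\,gf^s=R(s)\cdot(fg)f^s$, commute $\partial_t^m$ through via (\ref{eq1_lem2}), use $t\partial_t=-(s+1)$ to extract the factor $(s-m+1)$ hidden in $b_{gf^s}(s-m)$, and reintroduce $(s+1)$ at the end to land in $\cD_X\langle s,t\rangle\cdot tu$. The only cosmetic difference is the cancellation step: the paper divides by $t$ (injectivity of $t$ on $\widetilde{B}_f$) before applying $\partial_t^{m-1}$, whereas you apply $\partial_t^m$ first and then cancel the nonzerodivisor $s-m+1$ using that $\widetilde{B}_f$ is free over $\cO_X[1/f,s]$ — both are valid.
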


\begin{proof}
We follow the argument in the proof of \cite[Proposition~6.12]{MP}, which treats the case $g=1$.
Let $b(s)=b_{gf^s}(s)$ and $\widetilde{b}(s)=\widetilde{b}_{gf^s}(s)=b(s)/(s+1)$. Without any loss of generality, we may assume that $X$
is affine. By definition, there is $P\in\cD_X(X)\langle s,t\rangle$ such that
\begin{equation}\label{eq2_lem2}
b(s)gf^s=P\cdot t(gf^s).
\end{equation}
Since for every $j>0$ we have $t^j(gf^s)=(f^jg)f^s$, we may assume that $P\in\cD_X(X)[s]$. 

The assertion is clear for $m=0$, hence we may and will assume $m\geq 1$.
Since $s+1=-t\partial_t$, we deduce from (\ref{eq2_lem2}), using also (\ref{eq1_lem2}) that
$$-t\partial_t\widetilde{b}(s)gf^s=tP(s-1)gf^s.$$
Since the action of $t$ on $\widetilde{B}_f$ is injective, we conclude that
$$-\widetilde{b}(s-1)\partial_tgf^s=-\partial_t\widetilde{b}(s)gf^s=P(s-1)gf^s,$$
where the first equality follows from (\ref{eq1_lem2}). 
Furthermore, using again the relations (\ref{eq1_lem2}), we obtain
$$(s+1)\widetilde{b}(s-m)g\partial_t^mf^s=(s+1)\partial_t^{m-1}\widetilde{b}(s-1)\partial_tgf^s$$
$$=-(s+1)\partial_t^{m-1}P(s-1)gf^s=-(s+1)P(s-m)\partial_t^{m-1}gf^s=P(s-m)tg\partial_t^mf^s.$$
Since 
$$(s+1)\widetilde{b}(s-m)g\partial_t^mf^s\in\cD_X[s]tg\partial_t^mf^s,$$
the assertion in the lemma follows from the definition of $b_{g\partial_t^mf^s}(s)$.
\end{proof}

In the next section we prove Theorem~\ref{rootbound} by reducing it to a computation in the simple normal crossing case.
This computation is the content of the next lemma. We assume here that we have global algebraic coordinates $x_1,\ldots,x_n$ on $X$ (that is,
$dx_1,\ldots,dx_n$ give a trivialization of the cotangent sheaf $\Omega_X$).

\begin{lem}\label{lem3}
With the above notation, suppose that $f=\prod_{i=1}^nx_i^{a_i}$ and $g=\prod_{i=1}^nx_i^{b_i}$. If $u=g\partial_t^mf^s$,
for some $m\geq 0$, then the following hold:
\begin{enumerate}
\item[i)] $b_u(s)$ divides  $(s+1)\cdot \prod_{i=1}^n\prod_{j=1}^{a_i}\left(s-m+\frac{b_i+j}{a_i}\right)$  (with the convention that the second product is $1$ 
if $a_i=0$).
\item[ii)] If $m=0$, then $b_u(s)$ divides $\prod_{i=1}^n\prod_{j=1}^{a_i}\left(s+\frac{b_i+j}{a_i}\right)$.
 \item[iii)] For every $m$, if $a_1=1$ and $b_1=0$, then $b_u(s)$ divides $(s+1)\cdot \prod_{i=2}^n\prod_{j=1}^{a_i}\left(s-m+\frac{b_i+j}{a_i}\right)$.
\end{enumerate}
\end{lem}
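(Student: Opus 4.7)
The plan is to represent $\widetilde{B}_f$ concretely as the free rank-one $\cO_X[1/f, s]$-module generated by $f^s$, and carry out the proof by a direct monomial computation. From the relation $\partial_t = -s t^{-1}$ together with the action of $t$, an easy induction gives $\partial_t^m f^s = c_m(s) f^{s-m}$, where $c_m(s) := (-1)^m s(s-1) \cdots (s-m+1)$, so that in our simple normal crossing situation
\[
u = c_m(s) \prod_{i=1}^n x_i^{a_i s + b_i - m a_i} \quad \text{and} \quad tu = c_m(s+1) \prod_{i=1}^n x_i^{a_i(s - m + 1) + b_i},
\]
using that $t$ shifts $s \mapsto s+1$ and multiplies by $f$.

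The central computation is to apply the operator $\prod_i \partial_{x_i}^{a_i} \in \cD_X$ to $tu$. Each factor $\partial_{x_i}^{a_i}$ lowers the exponent of $x_i$ by exactly $a_i$ — recovering the exponents appearing in $u$ — while producing the polynomial factor $\prod_{j=1}^{a_i}(a_i s + b_i - m a_i + j)$ (an empty product, hence $1$, if $a_i = 0$). Combined with the elementary algebraic identity $c_m(s+1)(s - m + 1) = (s+1)\, c_m(s)$, this yields the master identity
\[
(s - m + 1) \prod_{i=1}^n \partial_{x_i}^{a_i} \cdot tu \;=\; (s+1) \prod_{i=1}^n \prod_{j=1}^{a_i} \bigl(a_i s + b_i - m a_i + j\bigr) \cdot u,
\]
valid in $\widetilde{B}_f$.

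Parts (i)--(iii) then follow by inspection. For (i), the left-hand side lies in $\cD_X\langle s,t\rangle \cdot tu$ because $s = -\partial_t t \in \cD_X\langle s, t\rangle$; after rescaling $a_i s + b_i - m a_i + j = a_i(s - m + (b_i+j)/a_i)$, the monic divisibility statement emerges. For (ii), specializing $m = 0$ gives $c_0 \equiv 1$, and both sides of the master identity carry the factor $(s+1)$, which may be cancelled in the free module $\widetilde{B}_f$, leaving $\prod_i \partial_{x_i}^{a_i} \cdot tu = \prod_i \prod_{j=1}^{a_i}(a_i s + b_i + j) \cdot u$ and no $(s+1)$ on the right. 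For (iii), the hypotheses $a_1 = 1$ and $b_1 = 0$ make the $(i,j) = (1,1)$ factor on the right equal to $(s - m + 1)$, which cancels the identical factor on the left; this produces the sharper identity $\prod_i \partial_{x_i}^{a_i} \cdot tu = (s+1) \prod_{i \geq 2} \prod_{j=1}^{a_i}(a_i s + b_i - m a_i + j) \cdot u$, whence the claim.

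The main subtlety is recognizing the identity $c_m(s+1)(s - m + 1) = (s+1)\, c_m(s)$, which encodes all the cancellations needed for the three parts. Once this is in place, the rest is routine bookkeeping; in particular, $\partial_{x_i}^{a_i}$ and polynomials in $s = -\partial_t t$ manifestly lie in $\cD_X\langle s, t\rangle = \cD_X\langle \partial_t t, t\rangle$, and the cancellations of $(s+1)$ in (ii) and of $(s - m + 1)$ in (iii) are legitimate because $\widetilde{B}_f$ is free over the integral domain $\cO_X[1/f, s]$.
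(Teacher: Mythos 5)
Your proof is correct. For parts i) and ii) you arrive, by a slightly more computational route (writing $\partial_t^mf^s=c_m(s)f^{s-m}$ with $c_m(s)=(-1)^ms(s-1)\cdots(s-m+1)$ and working entirely inside the free $\cO_X[1/f,s]$-module $\widetilde{B}_f$), at exactly the identity the paper uses, namely
$$(s-m+1)\,\partial_{x_1}^{a_1}\cdots\partial_{x_n}^{a_n}\cdot tu=(s+1)\prod_{i=1}^n\prod_{j=1}^{a_i}\big(a_i(s-m)+b_i+j\big)\,u;$$
the paper obtains the same identity by commuting $\partial_t^m$ past the $\partial_{x_i}$ and using $(s+1)\partial_t^mt=(s-m+1)t\partial_t^m$, and your identity $c_m(s+1)(s-m+1)=(s+1)c_m(s)$ is precisely that commutation relation in disguise, so the difference there is cosmetic. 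Where you genuinely diverge is part iii): the paper deduces it from Lemma~\ref{lem2}, i.e.\ from the divisibility $b_{g\partial_t^mf^s}(s)\mid(s+1)\widetilde{b}_{gf^s}(s-m)$, combined with part ii) applied to $gf^s$; you instead observe that when $a_1=1$ and $b_1=0$ the factor indexed by $(i,j)=(1,1)$ on the right-hand side is exactly $(s-m+1)$ and cancel it against the identical factor on the left, which is legitimate because multiplication by a nonzero polynomial in $s$ is injective on $\cO_X[1/f,s]f^s$. This yields a self-contained derivation of iii) from the master identity alone, bypassing Lemma~\ref{lem2} (a separate ingredient with its own nontrivial operator computation); the paper's route has the mild advantage of recording the relation between $b_{g\partial_t^mf^s}$ and the reduced $b$-function of $gf^s$ as a statement of independent interest. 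Both arguments are sound.
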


\begin{proof}
It is convenient to write $gf^s$ as $\prod_ix_i^{a_is+b_i}$ and 
$tgf^s$ as $\prod_ix_i^{a_i(s+1)+b_i}$; we further write these using multi-index notation as $x^{as+b}$ and $x^{a(s+1)+b}$, respectively.
We begin by proving i) and ii). Let
$$h(s)=\prod_{i=1}^n\prod_{j=1}^{a_i}\left(a_i(s-m)+b_i+j\right).$$
Note that we have 
$$\partial_{x_1}^{a_1}\cdots\partial_{x_n}^{a_n}\partial_t^mx^{a(s+1)+b}=\partial_t^m\cdot\prod_{i=1}^n\prod_{j=1}^{a_i}
(a_is+b_i+j)x^{as+b}=h(s)\partial_t^m x^{as+b},$$
where the last equality follows from (\ref{eq1_lem2}).
If $m=0$, this implies that
$$h(s)u\in\cD_X[s]\cdot tu,$$
hence $b_u(s)$ divides $h(s)$, giving the assertion in ii). 
In general, we have 
$$(s+1)h(s)u=\partial_{x_1}^{a_1}\cdots\partial_{x_n}^{a_n}(s+1)\partial_t^mtgf^s$$
and using the fact that 
$s+1=-t\partial_t$ and (\ref{eq1_lem2}), we see that
$$(s+1)\partial_t^mt=-t\partial_t^{m+1}t=t\partial_t^ms=t(s-m)\partial_t^m=(s-m+1)t\partial_t^m.$$
We thus conclude that 
$$(s+1)h(s)u=(s-m+1)\partial_{x_1}^{a_1}\cdots\partial_{x_n}^{a_n}tu\in\cD_X[s]tu,$$
and the assertion in i) follows from the definition of $b_u$.

We now turn to the proof of iii). If $v=gf^s$, then it follows from our assumption on $a_1$ and $b_1$ that $\widetilde{b}_v$ is well-defined
(see Remark~\ref{reduced_b_function}). Moreover, it follows from Lemma~\ref{lem2} that 
$b_u(s)$ divides $(s+1)\widetilde{b}_v(s-m)$, hence it is enough to show that
$\widetilde{b}_v(s)$ divides $\prod_{i=2}^n\prod_{j=1}^{a_i}\left(s+\frac{b_i+j}{a_i}\right)$. By the assumption on $a_1$ and $b_1$,
this is equivalent with
the fact that
$b_v(s)$ divides the polynomial $\prod_{i=1}^n\prod_{j=1}^{a_i}\left(s+\frac{b_i+j}{a_i}\right)$, which we have already seen in ii).
\end{proof}

\section{Proof of the main theorem}

We begin with some general considerations on the setting of Theorem~\ref{rootbound}. Suppose that $X$ is a smooth, $n$-dimensional
complex algebraic variety and $f\in\cO_X(X)$ is nonzero. 
For every $m\geq 0$ and $g\in\cO_X(X)$, we consider
$$N_{f,m}(g):=\cD_X\langle s,t\rangle\cdot g\partial_t^mf^s\subseteq \widetilde{B}_f.$$
If $g=1$, then we simply write $N_{f,m}$ instead of $N_{f,m}(g)$. 

Note that by (\ref{eq1_lem2}), we have
\begin{equation}\label{eq1_lem2_v2}
P(s,t)t=tP(s-1,t)\quad\text{for every}\quad P\in\cD_X\langle s,t\rangle.
\end{equation}
This immediately implies that $tN_{f,m}(g)$ is a $\cD_X\langle s,t\rangle$-submodule of $N_{f,m}(g)$.

\begin{rmk}\label{rem_interpretation_b_function}
We recall the following useful interpretation of $b$-functions: if $u=g\partial_t^mf^s$, then $b_u(s)$
is the minimal polynomial of the action of $s$ on the quotient $N_{f,m}(g)/tN_{f,m}(g)$. Indeed, it is clear from
(\ref{eq1_lem2_v2}) that $b(s)u\in\cD_X\langle s,t\rangle tu$ if and only if $b(s)u\in tN_{f,m}(g)$. 
Moreover, if this is the case, then for every $P\in\cD_X\langle s,t\rangle$, we have $b(s)P(s,t)u\in tN_{f,m}(g)$. 
\end{rmk}

Recall that the Jacobian ideal $J_f$ of $f$ is the coherent ideal of $\cO_X$ defined as follows. If $x_1,\ldots,x_n$ are algebraic coordinates 
on an affine open subset of $X$, then $J_f$ is generated in $U$ by $\partial f/\partial x_1,\ldots,\partial f/\partial x_n$ (the definition is independent
of the choice of coordinates, but it depends on $f$, not just on the ideal generated by $f$). We always make the following extra assumption on $f$:
\begin{equation}\label{eq_assumption}
\text{The zero locus of $J_f$ is contained in the zero-locus of $f$.}
\end{equation}
If $f$ is not a constant, then it follows by Generic Smoothness that this assumption is satisfied after possibly replacing $X$ by an open
neighborhood of the zero-locus of $f$. Such a replacement is harmless when studying the singularities of the hypersurface defined by $f$.

We note that by \cite[Theorem~5.3]{Kashiwara}, the $\cD_X$-module $N_{f,0}$ is coherent and its characteristic variety
is the closure $W_f$ of
$$\{\big(x,sdf(x)\big)\in T^*X\mid f(x)\neq 0, s\in\C\}.$$
It is clear from the definition that $W_f$ is an irreducible subvariety of $T^*X$, of dimension $n+1$, which dominates $X$. 

In fact, we will only need the above assertions about $N_{f,0}$ in the (easier) case when $f$ defines a simple normal crossing divisor. 
Under this assumption, we deduce the same assertion for all $N_{f,m}$, as follows.

\begin{prop}\label{N_f}
Suppose that $f\in\cO_X(X)$ satisfies (\ref{eq_assumption}) and defines a simple normal crossing divisor and let $m\geq 0$.
\begin{enumerate}
\item[i)] The $\cD_X$-module $N_{f,m}$ is generated by $\partial_t^j f^s$, for $0\leq j\leq m$.
\item[ii)] The characteristic variety of $N_{f,m}$ is $W_f$.
\end{enumerate}
\end{prop}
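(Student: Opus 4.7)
The plan is to prove part (i) by identifying $N_{f,m}$ with $M_m := \sum_{j=0}^m \cD_X\cdot\partial_t^j f^s$, and then to deduce part (ii) by induction on $m$ from Kashiwara's result for $N_{f,0}$, using that left multiplication by $\partial_t^m$ provides a $\cD_X$-module isomorphism $N_{f,0} \xrightarrow{\sim} \cD_X\cdot\partial_t^m f^s$. All statements are local, so I would work in coordinates where $f = u\cdot x_1^{a_1}\cdots x_n^{a_n}$ with $u$ a unit; the assumption (\ref{eq_assumption}) forces some $a_i\geq 1$ (otherwise $V(J_f) = X \not\subseteq V(f) = \emptyset$), and I would fix such an index as $a_1$.

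For part (i), the inclusion $M_m\subseteq N_{f,m}$ follows by descending induction on $j$ from the relation (\ref{action_of_t}) rewritten as $j\partial_t^{j-1}f^s = f\partial_t^j f^s - t\partial_t^j f^s$. The reverse inclusion reduces to checking that $M_m$ is stable under $s=-\partial_tt$ and $t$; stability under $t$ is (\ref{action_of_t}), while for $s$ one computes $s\partial_t^j f^s = j\partial_t^j f^s - f\partial_t^{j+1}f^s$, which is in $M_m$ for $j<m$. The case $j=m$ requires showing $f\partial_t^{m+1}f^s\in M_m$; I expect this to be the main obstacle. It is handled by the identity $x_1\partial_{x_1}(f) = c\cdot f$ with $c\in\cO_X$ restricting to $a_1\neq 0$ on $\{x_1=0\}$, hence a local unit; combined with the formula $D\cdot \partial_t^m f^s = -D(f)\partial_t^{m+1}f^s$, this yields $f\partial_t^{m+1}f^s = -c^{-1}(x_1\partial_{x_1})\partial_t^m f^s \in M_m$.

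For part (ii), part (i) tells us $N_{f,m}$ is finitely generated over $\cD_X$, hence coherent. I would induct on $m$, the base case $m=0$ being \cite[Theorem~5.3]{Kashiwara}. For the inductive step, the short exact sequence
$$0 \to N_{f,m-1}\to N_{f,m}\to N_{f,m}/N_{f,m-1}\to 0$$
gives that the characteristic variety of $N_{f,m}$ equals $W_f$ union the characteristic variety of the quotient, so it suffices to bound the latter by $W_f$. The quotient is generated by the image of $\partial_t^m f^s$ and is thus a quotient of $\cD_X\cdot\partial_t^m f^s$. Since $\partial_t$ commutes with $\cD_X$ inside $\cD_X\langle t,\partial_t\rangle$, left multiplication by $\partial_t^m$ is $\cD_X$-linear on $\widetilde{B}_f$; it is injective (it shifts indices in the $\cO_X[1/f]$-basis $\{\partial_t^j f^s\}_{j\geq 0}$) and maps $N_{f,0}$ onto $\cD_X\cdot\partial_t^m f^s$, giving a $\cD_X$-isomorphism $N_{f,0}\cong \cD_X\cdot\partial_t^m f^s$. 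Hence the latter also has characteristic variety $W_f$, completing the proof.
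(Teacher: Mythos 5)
Your strategy is the same as the paper's: part (i) via the submodule $M_m=\sum_{j=0}^m\cD_X\cdot\partial_t^jf^s$, descending induction for one inclusion, and reduction of stability under $s$ to an Euler-type identity; part (ii) via the exact sequence $0\to N_{f,m-1}\to N_{f,m}\to N_{f,m}/N_{f,m-1}\to 0$ and the $\cD_X$-linear map given by left multiplication by $\partial_t^m$. Part (ii) is correct (the paper only needs the surjection $N_{f,0}\to N_{f,m}/N_{f,m-1}$, but your injectivity remark is fine). The problem is in the step you yourself flag as the main obstacle in part (i).

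Two issues there. First, on a chart where all $a_i=0$ the assumption (\ref{eq_assumption}) does not give a contradiction: it forces $V(J_f)=\emptyset$, i.e.\ the unit $u$ has no critical points (not $V(J_f)=X$), so the case of invertible $f$ must still be handled (there one uses $\partial_{x_j}\partial_t^mf^s=-\partial_{x_j}(f)\,\partial_t^{m+1}f^s$ with $\partial_{x_j}(f)$ nonvanishing near the given point). Second, and more seriously, the function $c=x_1u^{-1}\partial_{x_1}(u)+a_1$ with $x_1\partial_{x_1}(f)=cf$ is a unit only in a neighborhood of $\{x_1=0\}$: knowing that $c$ restricts to the nonzero constant $a_1$ on $\{x_1=0\}$ says nothing about its zeros elsewhere, even elsewhere on $V(f)$. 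For instance, take $f=x_1x_2(2-x_1)$ on $X=\A^2\setminus\{x_1=2\}$, which is a simple normal crossing divisor satisfying (\ref{eq_assumption}) since $V(J_f)=\{(0,0)\}$; here $c=1-\tfrac{x_1}{2-x_1}$ vanishes at the point $(1,0)$ of the divisor. So the identity $f\partial_t^{m+1}f^s=-c^{-1}(x_1\partial_{x_1})\partial_t^mf^s$ is not available on all of the chart, and your argument only proves generation away from the zero locus of $c$. The paper's fix is to use all $n$ functions $c_i$ defined by $x_i\partial_{x_i}(f)=c_if$ and to show, using (\ref{eq_assumption}), that they have no common zero; writing $1=\sum_ih_ic_i$ then gives $sf^s=\sum_ih_ix_i\partial_{x_i}f^s\in\cD_Xf^s$, which by $s\partial_t^jf^s=\partial_t^j(s+j)f^s$ yields stability of $M_m$ under $s$. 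Alternatively, you could re-choose the index $i$ (or use a plain $\partial_{x_j}$ off the divisor) in a neighborhood of each point, but some such additional argument is needed.
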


\begin{proof}
In order to prove i), note first that by (\ref{action_of_t}), for every $j\geq 0$ we have
$$t\partial_t^jf^s=f\partial_t^jf^s-j\partial_t^{j-1}f^s.$$
We deduce by descending induction on $0\leq j\leq m$ that $\partial_t^jf^s\in N_{f,m}$. 
Moreover, the same formula implies that if we put
$N'_{f,m}=\sum_{j=0}^m\cD_X\cdot \partial_t^jf^s$, then $N'_{f,m}$ is a $\cD_X[t]$-submodule of $N_{f,m}$. 
In order to complete the proof of i) it is enough to show that $s\partial_t^jf^s\in N'_{f,m}$ for $0\leq j\leq m$.
In fact, it follows from (\ref{eq1_lem2}) that
$$s\partial_t^jf^s=\partial_t^j(s+j)f^s,$$
hence it is enough to show that $sf^s\in\cD_Xf^s$.

This is a local assertion. Since $f$ defines a simple normal crossing divisor, after passing to the elements of a suitable affine open cover of $X$, 
we may assume that $X$ is affine and we have algebraic coordinates $x_1,\ldots,x_n$ on $X$ such that $f=px_1^{a_1}\cdots x_n^{a_n}$, for some nonnegative integers 
$a_1,\ldots,a_n$, with $p$ an invertible regular function. Since 
$$x_i\partial_{x_i}f^s=s\left(x_ip^{-1}\frac{\partial p}{\partial x_i}+a_i\right)f^s,$$
it follows that in order to complete the proof of i) it is enough to show that the functions
$x_ip^{-1}\frac{\partial p}{\partial x_i}+a_i$, with $1\leq i\leq n$, have no common zeroes in $X$. 
Now, it is straightforward to check that condition (\ref{eq_assumption}) implies that the zero locus of these functions
is contained in the zero-locus of $f$. On the other hand, it can't intersect the zero-locus of $f$: if $x_i$ vanishes at a point in $X$ and $a_i>0$,
then $x_ip^{-1}\frac{\partial p}{\partial x_i}+a_i$ can't vanish at that point. This implies that indeed, the zero-locus of 
the functions $x_ip^{-1}\frac{\partial p}{\partial x_i}+a_i$, with $1\leq i\leq n$, is empty.

For ii), recall first that we know by  \cite[Theorem~5.3]{Kashiwara} that the characteristic variety ${\rm Char}(N_{f,0})$ of $N_{f,0}$ is equal to $W_f$.
We prove the general case by induction on $m$. If $m\geq 1$, then it follows from i) that we have
$N_{f,m-1}\subseteq N_{f,m}$ and the quotient $N_{f,m}/N_{f,m-1}$ is generated over $\cD_X$ by the class $w_m$ of $\partial_t^mf^s$. 
This implies that we have a surjective map $N_{f,0}=\cD_Xf^s\to N_{f,m}/N_{f,m-1}$ that maps $Pf^s$ to $Pw_m$ for $P\in\cD_X$
(note that if $Pf^s=0$, then
$P\partial_t^mf^s=\partial_t^mPf^s=0$ in $N_{f,m}$). 

We thus conclude, using also the induction hypothesis, that we have 
$$W_f={\rm Char}(N_{f,m-1})\subseteq {\rm Char}(N_{f,m-1})\cup {\rm Char}\big(N_{f,m}/N_{f,m-1})={\rm Char}(N_{f,m})$$
$$\subseteq
{\rm Char}(N_{f,m-1})\cup {\rm Char}(N_{f,0})=W_f.$$
Therefore ${\rm Char}(N_{f,m})=W_f$.
\end{proof}

As in \cite{Lichtin}, we will be making use also of right $\cD$-modules. Recall that if $\cM$ is a left $\cD_X$-module, then on the
$\cO_X$-module $\omega_X\otimes_{\cO_X}\cM$ one can put a right $\cD_X$-module structure. In this way, one gets a equivalence
of categories between left and right $\cD_X$-modules. 

This is easy to describe when we have a system of coordinates $x_1,\ldots,x_n$ on $X$. Indeed, in this case we have an involution
$\cD_X\to \cD_X$, denoted $P\to P^*$, uniquely characterized by the fact that $(PQ)^*=Q^*P^*$, $f^*=f$ for $f\in\cO_X$, and
$\partial_{x_i}^*=-\partial_{x_i}$. If $u$ is a section of $\cM$ and we denote by $u^*$ the section
$dx_1\wedge\ldots\wedge dx_n\otimes u$ of $\omega_X\otimes_{\cO_X}\cM$, then we have $u^*P^*=(Pu)^*$ for every section $P$ of $\cD_X$. 

Similarly, we have an equivalence of categories between left and right $\cD_X\langle s,t\rangle$-modules, which takes
a left $\cD_X\langle s,t\rangle$-module $\cM$, to $\omega_X\otimes_{\cO_X}\cM$. Again, the right action on this $\cO_X$-module
is easy to describe when we have coordinates $x_1,\ldots,x_n$: the involution described above on $\cD_X$ extends to a similar involution on
$\cD_X\langle t,\partial_t\rangle$ which maps $t$ to $t$ and $\partial_t\to -\partial_t$, hence maps $s=-\partial_tt$ to
$t\partial_t=\partial_tt-1$. This restricts to an involution on $\cD_X\langle s,t\rangle$, still denoted $P\to P^*$, that maps $t$ to $t$ and $s$ to $-s-1$,
such that $u^*P^*=(Pu)^*$ for every sections $u$ of $\cM$ and $P$ of $\cD_X\langle s,t\rangle$. 

\begin{eg}\label{eg1}
Suppose that we have coordinates $x_1,\ldots,x_n$ on $X$ and we consider the $\cD_X\langle s,t\rangle$-module $\widetilde{B}_f$. 
If $u$ is a section of $\widetilde{B}_f$ and $u^*$ is the corresponding section of $\omega_X\otimes_{\cO_X}\widetilde{B}_f$, then we see
that $b_u(s)$ is equal to $b_{u^*}(-s-1)$, where $b_{u^*}(s)$ is 
the monic polynomial of minimal degree with the property that
$$u^*b_{u^*}(s)\in u^*t\cdot\cD_X\langle s,t\rangle.$$
\end{eg}

We can now give the proof of our main result.

\begin{proof}[Proof of Theorem~\ref{rootbound}]
Note that the assertions in the theorem are local on $X$ (see Remark~\ref{local_char}). After taking a suitable affine open cover of $X$, we may
assume that $X$ is affine and that we have algebraic coordinates $x_1,\ldots,x_n$ on $X$. We put $dx=dx_1\wedge\ldots\wedge dx_n$.

If $f$ is invertible, then $b_u=1$ for every $u\in \widetilde{B}_f$. Indeed, 
suppose that $m$ is such that $u\in \oplus_{i=0}^m\cO_X[1/f]\partial_t^if^s$.
It follows from
(\ref{action_of_t}) that $(t-f)^{m+1}u=0$. Since $f$ is invertible, dividing by $f^{m+1}$,
we see that there is $P\in\cO_X[t]$ such that $u=tP\cdot u$, hence $b_u=1$. 
Therefore in this case all assertions in the theorem are trivial.

From now
on, we assume that $f$ is not invertible; in particular, it is not a constant. Moreover, this shows that we may always replace $X$ by an open neighborhood
of the zero-locus of $f$. Since $f\circ \pi$ is not constant, by applying the Generic Smoothness theorem for the map $f\circ \pi\colon Y\to \C$,
we see that after possibly 
removing from $X$ finitely many closed subsets of the form $f^{-1}(\lambda)$, for $\lambda\neq 0$, we may assume that $f\circ \pi$ satisfies condition (\ref{eq_assumption}). 

After these preparations, we begin the proof of the theorem, following closely the argument in \cite{Kashiwara}, as modified in \cite{Lichtin}. 
We need to understand the roots of $b_u(s)$, where $u=g\partial_t^mf^s$. As in Example~\ref{eg1}, we consider the right
$\cD_X\langle s,t\rangle$-module $N:=\omega_X\otimes_{\cO_X}N_{f,m}(g)$ and the global section $u^*=dx\otimes u$. We have seen that
$b_u(s)=b_{u^*}(-s-1)$, where $b_{u^*}(s)$ is the monic polynomial of minimal degree such that
$$u^*b_{u^*}(s)\in u^*t\cdot\cD_X\langle s,t\rangle.$$
Arguing as in Remark~\ref{rem_interpretation_b_function}, we see that $b_{u^*}$ is the minimal polynomial of the action of $s$ on 
the right $\cD_X$-module $N/Nt$. 

Let $F=f\circ\pi$ and $G=g\circ \pi$. On $Y$ we have the right $\cD_Y\langle s,t\rangle$-module $\omega_Y\otimes_{\cO_Y}N_{F,m}(G)$
and its global section $v=\pi^*(dx)\otimes G\partial_t^mF^s$. We consider $M:=v\cdot \cD_Y\langle s,t\rangle$ and denote by $B(s)$ the minimal
polynomial of the action of $s$ on the $\cD_Y$-module $M/Mt$. Suppose that $U$ is an affine open subset of $Y$ on which we have coordinates
$y_1,\ldots,y_n$ such that
$$F\vert_U=p_1y_1^{a_1}\cdots y_n^{a_n}\quad\text{and}\quad
\pi^*(dx)=p_2y_1^{k_1}\cdots y_n^{k_n}dy,$$
with $p_1$ and $p_2$ invertible functions on $U$. 
We can also write $G\vert_U=hy_1^{b_1}\cdots\cdot y_m^{b_m}$, for some $h\in\cO_Y(U)$, with the $b_j$ as in the statement of the theorem.

If we write $B_U(s)$ for the minimal polynomial of the action of $s$ on the $\cD_U$-module $M/Mt\vert_U$, 
after translating to left $\cD$-modules and using Lemma~\ref{lem1}, we see that $B_U(s)$ 
is equal to 
$q_U(-s-1)$, where $q_U(s)$ is the $b$-function corresponding to the element 
$$h\cdot \prod_{j=1}^ny_j^{k_j+b_j}\partial_t^m(y_1^{a_1}\cdots y_n^{a_n})^s\in\widetilde{B}_{y_1^{a_1}\cdots y_n^{a_n}}.$$
We now apply Lemma~\ref{lem3} to conclude the following:
\begin{enumerate}
\item[i)] Every root of $q_U$ is $\leq -\min\left\{1,-m+\min_{a_i\neq 0}\frac{k_i+1+b_i}{a_i}\right\}$.
\item[ii)] If $m=0$, then every root of $q_U$ is $\leq -\min_{a_i\neq 0}\frac{k_i+1+b_i}{a_i}$.
\item[iii)] If $g=1$, then every root of $q_U$ is either equal to $-1$ or to some $m-\frac{k_i+\ell}{a_i}$, with $a_i\neq 0$ and
$1\leq\ell\leq a_i$. Moreover, if $D$ is reduced and its strict transform on $Y$ is smooth, then we may assume that the divisor on
$Y$ defined by $y_i$ is exceptional (indeed, in this case at most one $y_i$ satisfies $k_i=0$--equivalently, it is not exceptional--and $a_i>0$; moreover, in this case $a_i=1$). 
\end{enumerate}
Note that the properties in i) and ii) are clear if $h=1$. By Theorem~\ref{thm_Sabbah}, they are equivalent to
$$\prod_{j=1}^ny_j^{k_j+b_j}\partial_t^m(y_1^{a_1}\cdots y_n^{a_n})^s\in V^{\gamma}\widetilde{B}_{y_1^{a_1}\cdots y_n^{a_n}},$$ 
where $\gamma$ is the respective minimum. However, the pieces of the $V$-filtration are $\cO_U$-modules, which implies that
$$h\cdot \prod_{j=1}^ny_j^{k_j+b_j}\partial_t^m(y_1^{a_1}\cdots y_n^{a_n})^s\in V^{\gamma}\widetilde{B}_{y_1^{a_1}\cdots y_n^{a_n}},$$ 
giving the assertions in i) and ii).

Finally, we note that if we consider a cover of $Y$ by affine open subsets $U$ as above, then $B(s)$ is the least common multiple of the polynomials
$B_U(s)$. The rest of the proof is devoted to showing that every root of $b_{u^*}(s)$ is of the form $\lambda+j$ for some root $\lambda$ of $B$
and some non-negative integer $j$. In light of properties i), ii), and iii) above, this implies the statement of the theorem. 

In order to relate $B$ and $b_{u^*}$, note first that the action of $t$ on $\widetilde{B}_F$ is injective and thus the (right) action of $t$ on 
$M$ is injective. Since $M\cdot B(s)\subseteq Mt$, it follows that there is a morphism of right $\cD_Y$-modules $\varphi\colon M\to M$ such that 
\begin{equation}\label{eq1_proof_main}
B(s)=t\circ\varphi.
\end{equation}
 We now consider the coherent right $\cD_X$-module $\int^0_{\pi}M$, the $0^{\rm th}$ cohomology of the $\cD$-module theoretic 
push-forward of $M$ by $\pi$. Since $M$ is a right $\cD_Y\langle s,t\rangle$-module, we see that $\int^0_{\pi}M$ is a right $\cD_X\langle s,t\rangle$-module.
By functoriality, the equality (\ref{eq1_proof_main}) gives the equality $B(s)=t\circ\int^0_{\pi}\varphi$ of maps on $\int^0_{\pi}M$. We thus see that
\begin{equation}\label{eq2_proof_main}
\left(\int^0_{\pi}M\right)\cdot B(s)\subseteq\left(\int^0_{\pi}M\right)\cdot t.
\end{equation}

A key ingredient for what follows is a canonical section of $\int^0_{\pi}M$, that can be defined as follows. Recall that by definition, we have
$$\int^0_{\pi}M=R^0\pi_*(M\otimes^L_{\cD_Y}\cD_{Y\to X}),$$
where $\cD_{Y\to X}$ is the transfer bimodule $\pi^*(\cD_X)$. Note now that on $Y$ we have a morphism of right $\cD_Y$-modules 
$\cD_Y\to M$ that maps $1$ to $v$. The (derived) tensor product with $\cD_{Y\to X}$ induces the morphism
$$\cD_{Y\to X}\to M\otimes^L_{\cD_Y}\cD_{Y\to X}.$$
On the other hand, the global section $1$ of $\cD_X$ induces a global section $\pi^*(1)$ of $\cD_{Y\to X}$, hence a morphism
of $\cO_Y$-modules $\cO_Y\to \cD_{Y\to X}$. Applying $R^0\pi_*$ to the composition
$$\cO_Y\to\cD_{Y\to X}\to M\otimes^L_{\cD_Y}\cD_{Y\to X}$$
gives a morphism 
$$\cO_X=R^0\pi_*(\cO_Y)\to R^0\pi_*(M\otimes^L_{\cD_Y}\cD_{Y\to X})=\int_{\pi}^0M.$$
The image of $1$ is a global section of $\int_{\pi}^0M$, that we denote $w$. It is straightforward to see that if $V\subseteq X$ is the complement
of the zero-locus of $f$ (so that, by assumption, $\pi$ is an isomorphism over $V$),
then $\int^0_{\pi}M\vert_V=\big(\omega_X\otimes_{\cO_X}N_{f,m}(g)\big)\vert_V$ and $w\vert_V=u^*\vert_V$. 

Let $L=w\cdot\cD_X\langle s,t\rangle\subseteq\int^0_{\pi}M$. We next show that there is a morphism of right $\cD_X\langle s,t\rangle$-modules
$\psi\colon L
\to\omega_X\otimes_{\cO_X}N_{f,m}(g)$ that maps $w$ to $u^*$ (in which case $\psi$ is clearly surjective). For this, it is enough to show that if
$P$ is a section of $\cD_X\langle s,t\rangle$ such that $w\cdot P(s,t)=0$, then $u^*\cdot P(s,t)=0$. This follows from the fact that on $V$ the section $w$
gets identified with $u^*$ and the fact that $\omega_X\otimes_{\cO_X}N_{f,m}(g)$ is a torsion-free $\cO_X$-module (this follows from the fact that
$\omega_X\otimes_{\cO_X}\widetilde{B}_f$ is isomorphic as an $\cO_X$-module to $\cO_X[1/f,s]$, hence it is torsion-free). 

We next show that the quotient $M':=(\int^0_{\pi}M)/L$ is holonomic as a $\cD_X$-module.
Since $N_{F,m}(G)\subseteq N_{F,m}$ and $F$ satisfies (\ref{eq_assumption})
we have by Proposition~\ref{N_f}
$${\rm Char}\big(N_{F,m}(G)\big)\subseteq {\rm Char}(N_{F,m})=W_F.$$
Of course, the characteristic variety of a right $\cD$-module is the characteristic variety of the corresponding left $\cD$-module,
hence ${\rm Char}(M)\subseteq {\rm Char}\big(N_{F,m}(G)\big)\subseteq W_F$.
It then follows from \cite[Theorem~4.2]{Kashiwara} that if $\alpha\colon Y\times_XT^*X\to T^*X$ and $\beta\colon Y\times_XT^*X\to T^*Y$
are the canonical morphisms, then 
$${\rm Char}\left(\int^0_{\pi}M\right)\subseteq \alpha\big(\beta^{-1}\big({\rm Char}(M)\big)\big)\subseteq \alpha\big(\beta^{-1}(W_F)\big).$$

We have seen that the restriction of $M'$ to $V$ is $0$, hence 
$${\rm Char}(M')\subseteq {\rm Char}\left(\int^0_{\pi}M\right)\cap \big(T^*X\times_X(X\smallsetminus V)\big).$$
We deduce that indeed $M'$ is holonomic
if we show that $\alpha\big(\beta^{-1}(W_F)\big)$ is contained in the union of $W_f$ with some subvarieties of $T^*X$
of dimension $\leq n$ (recall that $W_f$ is irreducible, of dimension $n+1$, and dominates $X$).
Let us write
\begin{equation}\label{eq_union}
\alpha\big(\beta^{-1}(W_F)\big)=\big(\alpha(\beta^{-1}(W_F))\times_XV\big)\cup \big(\alpha(\beta^{-1}(W_F))\times_X(X\smallsetminus V)\big).
\end{equation}
Since $\pi$ is an isomorphism over $V$, the first term in the union is equal to $W_f\times_XV\subseteq W_f$. On the other hand,
the second term in the union has dimension $\leq n$: it is shown in \cite[Proposition~5.6]{Kashiwara} that 
$W_F\times_Y\big(Y\smallsetminus \pi^{-1}(V)\big)$ is isotropic with respect to the canonical symplectic structure on $T^*Y$, hence by 
\cite[Proposition~4.9]{Kashiwara}, also $\alpha\big(\beta^{-1}(W_F)\big)\times_X(X\smallsetminus V)\subseteq T^*X$ is isotropic, hence of dimension $\leq n$. 
We thus conclude that $M'$ is a holonomic $\cD_X$-module.

Since $M'$ is a $\cD_X\langle s,t\rangle$-module which is holonomic
as a $\cD_X$-module, it follows from \cite[Proposition~5.11]{Kashiwara} that there is $N\geq 0$ such that 
$$\left(\int^0_{\pi}M\right)t^{N}\subseteq L, \quad\text{hence}\quad \left(\int^0_{\pi}M\right)t^{N+1}\subseteq L\cdot t.$$
On the other hand, it follows from (\ref{eq2_proof_main}), using the relations (\ref{eq1_lem2}) that 
$$L\cdot B(s)B(s-1)\cdots B(s-N)\subseteq \left(\int^0_{\pi}M\right)\cdot B(s)B(s-1)\cdots B(s-N)\subseteq \left(\int^0_{\pi}M\right)t^{N+1}\subseteq L\cdot t.$$
Finally, since $\omega_X\otimes_{\cO_X}N_{f,m}(g)$ is a quotient of $L$, it follows that
$$\omega_X\otimes_{\cO_X}N_{f,m}(g)\cdot B(s)B(s-1)\cdots B(s-N)\subseteq \big(\omega_X\otimes_{\cO_X}N_{f,m}(g)\big)\cdot t,$$
hence $b_{u^*}(s)$ divides $\prod_{j=0}^NB(s-j)$.
We thus see that every root of $b_{u^*}(s)$ is of the form $\lambda+j$, for some root $\lambda$ of $B$
and some non-negative integer $j$. As we have seen, this gives the assertions in the theorem.
\end{proof}

\begin{rmk}
The reason why in Theorem~\ref{rootbound} we don't get the same precise statement for the roots of the $b$-function of $b_{g\partial_t^mf^s}$ 
when $g\neq 1$ is that in the proof of the theorem we need $\pi$ to be an isomorphism over the complement $V$ of the zero-locus of $f$. However,
if $g$ satisfies the condition that its restriction to $V$ is a simple normal crossing divisor, then we can find $\pi$ such that, in addition, the inverse image 
of the hypersurface defined by $fg$ is a simple normal crossing divisor. In this case, the proof of Theorem~\ref{rootbound} (together with the corresponding
assertions in Lemma~\ref{lem3}) imply that 
\begin{enumerate}
\item[i)] Every root of $b_{g\partial_t^mf^s}$ is either a negative integer, or of the form $m-\frac{k_i+1+b_i+\ell}{a_i}$, for some 
nonnegative integer $\ell$.
\item[ii)] Moreover, if $m=0$, then the root is necessarily of the form $-\frac{k_i+1+b_i+\ell}{a_i}$, for some 
nonnegative integer $\ell$.
\end{enumerate}
\end{rmk}

\section{Multiplier ideals and the $V$-filtration}

We begin with a brief review of multiplier ideals. For a more in-depth discussion, we refer to \cite[Chapter 9]{Lazarsfeld}. Suppose that $X$ is a smooth
complex algebraic variety and
the nonzero $f\in\cO_X(X)$ defines the effective divisor
$D$. If $\pi\colon Y\to X$ is a log resolution of the pair $(X,D)$ (note that in this setting we don't need
$\pi$ to be an isomorphism over the complement of the support of $D$), then for every $\lambda\in\Q_{>0}$, we have
$$\cI(f^{\lambda})=\pi_*\cO_Y\big(K_{Y/X}-\lfloor\lambda \pi^*(D)\rfloor\big).$$
Here, if $\pi^*(D)=\sum_{i=1}^ra_iE_i$, we put $\lfloor\lambda D\rfloor=\sum_{i=1}^r\lfloor\lambda a_i\rfloor E_i$,
where for a real number $\alpha$, we denote by $\lfloor\alpha\rfloor$ the largest integer that is $\leq\alpha$.
If we write $K_{Y/X}=\sum_{i=1}^rk_iE_i$, then for $g\in\cO_X(X)$ we have that $g$ is a section of $\cI(f^{\lambda})$
if and only if $b_i+k_i-\lfloor\lambda a_i\rfloor\geq 0$ for all $i$, where $b_i={\rm ord}_{E_i}(g)$. 
In other words, this is the case if and only if
\begin{equation}\label{formula_lct}
\lambda<\min_i\frac{k_i+1+b_i}{a_i}
\end{equation}
(of course, we make the convention that the quotient is $\infty$ if $a_i=0$).
 We denote the right-hand side of (\ref{formula_lct}) by 
${\rm lct}_g(f)$. Note that for $g=1$, we recover the log canonical threshold ${\rm lct}(f)$ of $f$. 

We will also need the analytic description of multiplier ideals, which we now recall. Suppose that $z_1,\ldots,z_n$ are algebraic (or analytic)
coordinates on an open subset $U$ of $X$. Using these coordinates, we obtain an isomorphism of $U$ with an open subset of $\C^n$. In particular,
we get an induced Lebesgue measure on $U$. Given $g\in\cO_X(U)$, we have $g\in\Gamma\big(U,\cI(f^{\lambda})\big)$ if and only if 
the function $\frac{|g|^2}{|f|^{2\lambda}}$ is locally integrable on $U$. 

After these preparations, we can give the first application of Theorem~\ref{rootbound}.

\begin{proof}[Proof of Corollary~\ref{equality}]
Let $\alpha={\rm lct}_g(f)$ and $b(s)=b_{gf^s}(s)$. We deduce from Theorem~\ref{rootbound}ii) that every root of $b(s)$ is $\leq -\alpha$.
The assertion in the corollary thus follows if we prove that  $b(-\alpha)=0$. In order to show this, we follow closely the proof
of \cite[Theorem~B]{ELSV}, which in turn is based on the proof of \cite[Theorem~10.6]{Kollar}. 

Arguing by contradiction, let us assume that $b(-\alpha)\neq 0$. We will show that in this case $g$ is a section of $\cI(f^{\alpha})$, a contradiction.
After covering $X$ by suitable affine open subsets, we may assume that $X$ is an open subset $U\subseteq\C^n$.
We write $z_1,\ldots,z_n$ for the coordinate functions on $\C^n$.
 By the definition of ${\rm lct}_g(f)$,
and using the analytic interpretation of the multiplier ideals, we know that the function $\frac{|g|^2}{|f|^{2\lambda}}$ is locally integrable for $\lambda<\alpha$;
we aim to show that it is also locally integrable for $\lambda=\alpha$, so that we get the desired contradiction.

By definition of $b(s)$, there is a differential operator $P\in \Gamma(U,\cD_U)[s]$ such that
$$b(s)gf^s=P(s)\cdot (gf)f^s.$$
By specializing $s$ to $-\lambda$, and letting $Q=\frac{1}{b(-\lambda)}P(-\lambda)\in \Gamma(U,\cD_U)$ (recall that by assumption $b(-\lambda)\neq 0$), we have
\begin{equation}\label{eq1_pf_equality}
gf^{-\lambda}=Q\cdot gf^{1-\lambda}.
\end{equation}
If we apply complex conjugation, we get
\begin{equation}\label{eq2_pf_equality}
\overline{g}\overline{f}^{-\lambda}=\overline{Q}\cdot \overline{g}\overline{f}^{1-\lambda}.
\end{equation}
Note now that since $\partial_{z_i}(\overline{h})=0$ for every holomorphic function on $U$, given two such holomorphic
functions $h_1$ and $h_2$, we have $Q\cdot (\overline{h_1}h_2)=\overline{h_1}\cdot (Q\cdot h_2)$. Similarly, we have
$\overline{Q}\cdot (\overline{h_1}h_2)=h_2\cdot (\overline{Q}\cdot \overline{h_1})$. We thus see that if
$R=Q\overline{Q}$, then by multiplying (\ref{eq1_pf_equality}) and (\ref{eq2_pf_equality}), we obtain
\begin{equation}\label{eq3_pf_equality}
|g|^2|f|^{-2\lambda}=R\cdot \big(|g|^2|f|^{2(1-\lambda)}\big).
\end{equation}

Note now that the function $\frac{|g|^2}{|f|^{2\lambda}}$ is locally integrable if and only if for every ${\mathcal C}^{\infty}$ function $\varphi$ on $U$, with compact support, the function 
$$U\ni z\to \frac{|g(z)|^2}{|f(z)|^{2\lambda}}\varphi(z)$$
is integrable. On the other hand, if we denote by $\widetilde{R}$ the adjoint\footnote{Recall that if we choose real coordinates 
$x_1,\ldots,x_{2n}$ on $\C^n$ and if we write $R=\sum_{\alpha}h_{\alpha}\partial_x^{\alpha}$, then the adjoint of $R$ is
$\sum_{\alpha}(-\partial_x^{\alpha})h_{\alpha}$.} of $R$, we deduce from the Stokes theorem, using the fact that $\varphi$ has compact support in $U$, that
$$\int_U\left(R\cdot \frac{|g(z)|^2}{|f(z)|^{2(\lambda-1)}}\right)\varphi(z)dzd\overline{z}
=\int_U\frac{|g(z)|^2}{|f(z)|^{2(\lambda-1)}}(\widetilde{R}\cdot\varphi)dzd\overline{z},$$
in the sense that one integral is well-defined if and only if the other one is, and in this case they are equal. 
However, since $\widetilde{R}\cdot\varphi$ is a ${\mathcal C}^{\infty}$ function on $U$, with compact support, and since by assumption
we know that the function $\frac{|g(z)|^2}{|f(z)|^{2(\lambda-1)}}$ is locally integrable, it follows that the right-hand side in the above formula is
well-defined. We thus conclude using (\ref{eq3_pf_equality}) that the function $ \frac{|g(z)|^2}{|f(z)|^{2\lambda}}\varphi(z)$ is integrable. Since this holds for every $\varphi$,
we have obtained a contradiction.
\end{proof}

We now translate this to the theorem of Budur and Saito \cite{BS} describing multiplier ideals via the $V$-filtration.

\begin{proof}[Proof of Corollary~\ref{BSthm}]
It follows from Theorem~\ref{thm_Sabbah} that $gf^s$ lies in $V^{\alpha}\widetilde{B}_f$ if and only if all roots of $b_{gf^s}(s)$ are $\leq-\alpha$.
By Corollary~\ref{equality}, this is equivalent to ${\rm lct}_g(f)\geq\alpha$, which in turn is equivalent to $g$ being a section of $\cI(f^{\lambda})$
for all $\lambda<\alpha$. 
\end{proof}

\section{A lower bound for the minimal exponent}

In this section we deduce Corollary~\ref{bound_min_exp}
from our main result. We assume that $f\in\cO_X(X)$ defines a reduced, nonzero divisor $D$ on the smooth variety $X$.
In this case, the Bernstein-Sato polynomial of $f$ can be written as $b_f(s)=(s+1)\widetilde{b}_f(s)$ (see Remark~\ref{reduced_b_function}).
Recall from the Introduction that the negative of the largest root of $\widetilde{b}_f(s)$ is the \emph{minimal exponent} $\widetilde{\alpha}(f)$ (with the convention that this is infinite
if $\widetilde{b}_f(s)=1$).

We consider a strong log resolution $\pi\colon Y\to X$ for $D$ with the property that the strict transform of $D$ on $Y$ is smooth and
use the notation in Theorem~\ref{rootbound}.

\begin{proof}[Proof of Corollary~\ref{bound_min_exp}]
Let us write 
$$\min_{i; E_i \text{exceptional}}\frac{k_i+1}{a_i}=m+\alpha,$$
for a nonnegative integer $m$ and $\alpha\in (0,1]$. 
The key point is that by a result of Saito (see \cite[(1.3.8)]{Saito-MLCT}, where this is phrased in terms of the \emph{microlocal $V$-filtration})
 $\widetilde{\alpha}(f)\geq m+\alpha$
if and only if $\partial_t^mf^s\in V^{\alpha}\widetilde{B}_f$. By Theorem~\ref{thm_Sabbah}, this is equivalent with the fact that
all roots of $b_{\partial_t^mf^s}(s)$ are $\leq -\alpha$. 

Alternatively, this can be seen as follows: it was shown in \cite[Proposition~6.12]{MP} that 
$$b_{\partial_t^mf^s}(s)\vert (s+1)\widetilde{b}_f(s-m)\quad\text{and}\quad \widetilde{b}_f(s-m)\vert b_{\partial_t^mf^s}(s).$$
Since $\alpha\leq 1$, this implies that every root of $\widetilde{b}_f(s)$ is $\leq -(m+\alpha)$ if and only if every root of
$b_{\partial_t^mf^s}(s)$ is $\leq -\alpha$.

By Theorem~\ref{rootbound}, we know that every root $\lambda$ of $b_{\partial_t^mf^s}(s)$ is either a negative integer (which is $\leq-\alpha$, since
$\alpha\leq 1$) or equal to $m-\frac{k_i+1+\ell}{a_i}$, for some exceptional divisor divisor $E_i$ and some nonnegative integer $\ell$.
Since $\frac{k_i+1+\ell}{a_i}\geq m+\alpha$, we conclude that $\lambda\leq -\alpha$. This completes the proof of the corollary.
\end{proof}

\section*{References}
\begin{biblist}

\bib{Bernstein}{article}{
   author={Bern\v{s}te\u{\i}n, I. N.},
   title={Analytic continuation of generalized functions with respect to a
   parameter},
   journal={Funkcional. Anal. i Prilo\v{z}en.},
   volume={6},
   date={1972},
   number={4},
   pages={26--40},
}

\bib{BS}{article}{
 author = {Budur, N.},
 author={Saito, M.},
 title = {Multiplier ideals, {$V$}-filtration, and spectrum},
  journal = {J. Algebraic Geom.},
  volume = {14},
      date= {2005},
    number= {2},
     pages = {269--282},
}

\bib{ELSV}{article}{
   author={Ein, L.},
   author={Lazarsfeld, R.},
   author={Smith, K.~E.},
   author={Varolin, D.},
   title={Jumping coefficients of multiplier ideals},
   journal={Duke Math. J.},
   volume={123},
   date={2004},
   number={3},
   pages={469--506},
}

\bib{HTT}{book}{
   author={Hotta, R.},
   author={Takeuchi, K.},
   author={Tanisaki, T.},
   title={D-modules, perverse sheaves, and representation theory},
   publisher={Birkh\"auser, Boston},
   date={2008},
}

\bib{Kashiwara}{article}{
   author={Kashiwara, M.},
   title={$B$-functions and holonomic systems. Rationality of roots of
   $B$-functions},
   journal={Invent. Math.},
   volume={38},
   date={1976/77},
   number={1},
   pages={33--53},
}

\bib{Kollar}{article}{
   author={Koll\'ar, J.},
   title={Singularities of pairs},
   conference={
      title={Algebraic geometry---Santa Cruz 1995},
   },
   book={
      series={Proc. Sympos. Pure Math.},
      volume={62},
      publisher={Amer. Math. Soc., Providence, RI},
   },
   date={1997},
   pages={221--287},
}

\bib{Lazarsfeld}{book}{
       author={Lazarsfeld, R.},
       title={Positivity in algebraic geometry II},  
       series={Ergebnisse der Mathematik und ihrer Grenzgebiete},  
       volume={49},
       publisher={Springer-Verlag, Berlin},
       date={2004},
}

\bib{Lichtin}{article}{
   author={Lichtin, B.},
   title={Poles of $|f(z, w)|^{2s}$ and roots of the $b$-function},
   journal={Ark. Mat.},
   volume={27},
   date={1989},
   number={2},
   pages={283--304},
}

\bib{Malgrange}{article}{
  author= {Malgrange, B.},
     title= {Polynomes de {B}ernstein-{S}ato et cohomologie \'evanescente},
 booktitle= {Analysis and topology on singular spaces, {II}, {III}
              ({L}uminy, 1981)},
    series = {Ast\'erisque},
    volume = {101},
    pages = {243--267},
 publisher = {Soc. Math. France, Paris},
      date = {1983},
      }

\bib{MP}{article}{
      author={Musta\c t\u a, M.},
      author={Popa, M.},
      title={Hodge ideals for ${\mathbf Q}$-divisors, $V$-filtration, and minimal exponent},
      journal={preprint arXiv:1807.01935, to appear in Forum of Math., Sigma},
      date={2018},
}

\bib{Sabbah}{article}{
      author={Sabbah, C.},
	title={${\mathcal D}$-modules et cycles \'{e}vanescents (d'apr\`{e}s
B.~Malgrange et M. Kashiwara)},
conference={
      title={G\'eom\'etrie alg\'ebrique
et applications III, La R\'{a}bida (1984)}},
book={
      series={Traveaux en Cours},
      volume={24},
      publisher={Hermann, Paris},
   },
        date={1984}, 
	pages={53--98},
}

\bib{Saito-GM}{article}{
   author={Saito, M.},
   title={Hodge filtrations on Gauss-Manin systems. I},
   journal={J. Fac. Sci. Univ. Tokyo Sect. IA Math.},
   volume={30},
   date={1984},
   number={3},
   pages={489--498},
   issn={0040-8980},
}

\bib{Saito-MHM}{article}{
   author={Saito, M.},
   title={Mixed Hodge modules},
   journal={Publ. Res. Inst. Math. Sci.},
   volume={26},
   date={1990},
   number={2},
   pages={221--333},
}

\bib{Saito-MLCT}{article}{
      author={Saito, M.},
	title={Hodge ideals and microlocal $V$-filtration},
	journal={preprint arXiv:1612.08667}, 
	date={2016}, 
}

\end{biblist}

\end{document}